\theoremstyle{plain}
\newtheorem{theorem}{Theorem}[section]
\newtheorem{lemma}[theorem]{Lemma}
\newtheorem{proposition}[theorem]{Proposition}
\newtheorem{definition}[theorem]{Definition}
\theoremstyle{definition}
\newtheorem{remark}[theorem]{Remark}
\def\R{\mathbb{R}}
\def\B{\mathbb{B}}
\def\S{\mathbb{S}}
\def\Z{\mathbb{Z}}
\numberwithin{equation}{section}
\title{Stable free boundary minimal hypersurfaces in a wedge domain}
\begin{document}

\author{Zetian Yan}
\address{Department of Mathematics \\ UC Santa Barbara \\ Santa Barbara \\ CA 93106 \\ USA}
\email{ztyan@ucsb.edu}
\keywords{Bernstein problem; Free boundary minimal surfaces; $\mu$-bubbles} 
\subjclass[2020]{Primary 53A10; Secondary 49Q05, 49Q10, 58E12.}
\begin{abstract}
 We prove that a stable $C^{1,1}$-to-edge properly embedded free boundary minimal hypersurface $\Sigma^3$ of a $4$-dimensional wedge domain $\Omega^4_{\theta}$ with angle $\theta\in (0,\pi]$ is flat. 
\end{abstract}
\maketitle

\section{Introduction}
{\textbf{Classical Bernstein problem.}} For an entire minimal graph, i.e., the graph $\Gamma(u)$ of a function $u:\R^n\to \R$ satisfying the minimal hypersurface equation, does it have to be necessarily a hyperplane in $\R^{n+1}$?

In 1914, Sergei Bernstein solved this problem in the case $n=2$. In 1962, Fleming \cite{Flembing62} provided an alternative proof. In other low dimensional cases, the classical Bernstein problem was solved by De Giorgi \cite{De65}(n=3), Almgren \cite{Almgren66}(n=4) and Simons \cite{Simons68} ($5\leqslant n\leqslant 7$), respectively. Note that a minimal graph $\Gamma(u)$ in $\R^{n+1}$ satisfy the volume growth condition: for the ball $B_r(0)$ in $\R^{n+1}$, we have
\begin{equation}\label{growth}
    \mathrm{Vol}(B_r(0)\cap \Gamma(u))\leqslant \frac{\mathrm{Vol}(\S^{n})}{2}r^{n},
\end{equation}
where $\S^{n}$ is the unit sphere in $\R^{n+1}$. Moreover, two-dimensional minimal graphs satisfy the area-minimizing property automatically. Combining these with the log-cutoff trick yields the flatness; see \cite[Chapter 1]{CM11} for more details.

A natural generalization of the classical Bernstein problem is the following:

{\textbf{Stable Bernstein problem.}} Is every complete orientable immersed stable minimal hypersurface a hyperplane? 

When $n=2$, the stable Bernstein problem was confirmed by do Carmo and Peng \cite{doP79}, Fischer-Colbrie and Schoen \cite{FDS80} and Pogorelov \cite{Pogorelov81}, respectively. As for $n\leqslant 5$, it is also true with some additional assumptions; see \cite{SSY75,doP82,CSZ97,Chen01,NS07} and references therein. In particular, under the assumption on the volume growth of geodesic balls, Schoen, Simon and Yau \cite{SSY75} proved the flatness by estimating the $L^p$ bounds on the second fundamental form.

In high dimensional cases, both classical and stable Bernstein problems are not true. For $n\geqslant8$, Bombieri, De Giorgi and Giusti \cite{BDG69} showed that there are minimal entire graphs that are not hyperplanes. Moreover, in $\R^8$, there are non-flat area-minimizing complete orientable hypersurfaces constructed in \cite{HS85}. 

The stable Bernstein problem in the remaining cases without additional hypothesis were still open. In 2021, Chodosh and Li \cite{CL21} gave the positive answer in $\R^4$ by estimating the quantity
\begin{equation*}
    F(t)=\int_{\Sigma_t}|\nabla u|^2
\end{equation*}
and relating it to
\begin{equation*}
    A(t)=\int_{\Sigma_t}|A_M|^2,
\end{equation*}
where $u$ is the minimal positive Green's function, $\Sigma_t$ is the $t$-level set of $u$ and $A_M$ is the second fundamental form of $M$. Later, they gave an alternative proof \cite{CL23} by using the $\mu$-bubble to control volume growth and combining it with the argument in \cite{SSY75}. Recently, Chodosh, Li, Minter and Stryker \cite{CLMS24} generalized the $\mu$-bubble argument and solved the stable Bernstein problem in $\R^5$. In $\R^4$, we also note that 
Catino, Mastrolia, Roncoroni \cite{CMR24} provided a different proof by the conformal transformation.

It is interesting to consider Bernstein-type problems in less regular domains. In this paper, we consider the free boundary stable Bernstein problem in a wedge domain $\Omega^m_{\theta}$ with angle $\theta\in (0,\pi]$, where the wedge domain $\Omega^m_{\theta}$ is in the form of
\begin{equation*}
    \Omega^m=\Omega^2_{\theta}\times \R^{m-2}=\mathrm{Clos}\left(\left\{x\in \R^m: x_1>0, x_2\in (\tan (-\frac{\theta}{2})x_1, \tan (\frac{\theta}{2})x_1)\right\}\right).
\end{equation*} 
In the 1990s, Hildebrandt and Sauvigny \cite{HS97a,HS97b,HS99a,HS99b} were among the first to investigate free boundary minimal surfaces in a wedge domain $\Omega^2_{\theta}$. From their work, we note that the wedge angle $\theta$ affects the behavior of minimal surfaces crucially. In particular, when $\theta=\pi$, by the reflection principle \cite{GLZ20}, the free boundary stable Bernstein problem in $\R^{n+1}_+$ is equivalent to that in $\R^{n+1}$. While, for $\theta\in (0,\pi)$, the global reflection principle is no longer available, and the regularity of free boundary minimal hypersurfaces in $\Omega^m_{\theta}$ is subtle.

For $3\leqslant n+1\leqslant 6$, Mazurowski and Wang \cite{MW23} proved a Bernstein-type theorem under the assumption on the volume growth (\ref{growth}). We use the $\mu$-bubble technique to obtain the volume estimate and confirm this problem without additional assumptions in dimension $4$.
\begin{theorem}\label{main}
    Suppose that $\left(\Sigma^3, \left\{\partial_3 \Sigma\right\}\right)\subset \left(\Omega^4, \left\{\partial_4 \Omega\right\}\right)$ is a stable $C^{1,1}$-to-edge properly embedded free boundary minimal hypersurface. There exists an explicit constant $C$ such that
    \begin{equation}
        |\Sigma\cap B_{\R^4}(0,\rho)|_g\leqslant C \rho^3.
    \end{equation}
    In particular, $\Sigma=\Omega\cap P$ where $P\in \R^4$ is a hyperplane.
\end{theorem}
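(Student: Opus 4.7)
The plan is to follow the template of Chodosh and Li \cite{CL23} for the stable Bernstein problem in $\R^4$, transported to the free boundary setting inside the wedge $\Omega^4_\theta$. The two essential inputs will be: first, a cubic volume bound $|\Sigma \cap B_{\R^4}(0,\rho)|_g \leq C \rho^3$ obtained from a free-boundary $\mu$-bubble argument on $\Sigma$; second, the Schoen-Simon-Yau $L^p$ stability estimate on $|A_\Sigma|$, which together with the volume bound and a logarithmic cutoff will force $A_\Sigma \equiv 0$. Once $\Sigma$ is known to be totally geodesic and connected, properness together with the free-boundary orthogonality along the flat faces of $\partial_4\Omega$ identifies $\Sigma$ with $\Omega \cap P$ for some affine hyperplane $P \subset \R^4$.

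\textbf{Step 1: intrinsic $\mu$-bubble and volume bound.} Since $\Sigma$ is stable and orientable and $\partial_4\Omega$ is piecewise flat, its boundary second fundamental form vanishes and the stability inequality reduces to
\begin{equation*}
    \int_\Sigma |A_\Sigma|^2 \phi^2 \, d\mathcal{H}^3 \leq \int_\Sigma |\nabla_\Sigma \phi|^2 \, d\mathcal{H}^3
\end{equation*}
for every Lipschitz $\phi$ that is free along $\partial_3\Sigma$. A Fischer-Colbrie-Schoen type argument then produces $u>0$ satisfying $-\Delta_\Sigma u = |A_\Sigma|^2 u$ with Neumann data on $\partial_3\Sigma$. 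Setting $\tilde g = u^2 g$ and repeating the computation of \cite{CL23}, the warped geometry $(\Sigma, \tilde g)$ has nonnegative Ricci in a warped-product sense and mean-convex boundary. A free-boundary $\mu$-bubble inside geodesic annuli of $(\Sigma, \tilde g)$ then slices $\Sigma$ by $2$-dimensional surfaces of at most linearly growing $\tilde g$-area; converting back to $g$ and integrating by coarea yields the desired bound $|\Sigma \cap B_{\R^4}(0,\rho)|_g \leq C \rho^3$.

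\textbf{Step 2: Schoen-Simon-Yau iteration and log-cutoff.} With cubic volume growth established, I would run the iteration of \cite{SSY75}: testing the stability inequality against appropriate powers of $|A_\Sigma|$ multiplied by a Lipschitz cutoff yields, in dimension $n=3$, an estimate of the form $\int_{\Sigma \cap B_R} |A_\Sigma|^{2+2q} \leq C R^{3-2q}$ for every $q<\sqrt{2/3}$. A logarithmic cutoff equal to $1$ on $B_R$ and to $0$ outside $B_{R^2}$ drives the right-hand side to $0$ as $R\to\infty$, forcing $A_\Sigma\equiv 0$. Hence $\Sigma$ is contained in a hyperplane $P$; properness and the orthogonality condition along the flat faces of $\Omega$ complete the identification $\Sigma = \Omega \cap P$.

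\textbf{Principal obstacle.} The delicate point is running the free-boundary $\mu$-bubble of Step 1 in the presence of the wedge edge. The edge $\{x_1=x_2=0\}\times \R^2$ intersects $\Sigma$ in a codimension-$2$ singular locus $E \subset \Sigma$ along which only $C^{1,1}$-to-edge regularity is available. One must verify (a) that $E$ has vanishing $3$-capacity in $\Sigma$, so that the stability inequality and the eigenfunction equation for $u$ extend globally; (b) that $u$ has enough regularity near $E$ to legitimize the conformal metric $\tilde g = u^2 g$; and (c) that the $\mu$-bubble minimizer is regular up to $E$. Each of these steps relies on the piecewise flatness of $\partial_4\Omega$ and the constraint $\theta\in(0,\pi]$, which guarantees non-positive contributions in the edge-localized second variation and permits the partial reflection and doubling constructions of \cite{HS97a, GLZ20} to be applied near $E$.
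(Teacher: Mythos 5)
Your overall scaffolding (conformal deformation, free boundary $\mu$-bubble, cubic volume bound, then Schoen--Simon--Yau) is the right template, but the central step is mis-specified in a way that would not run.

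\textbf{The conformal factor.} You propose $\tilde g = u^2 g$ with $u$ the Jacobi field from stability. This is not the deformation used in the paper (nor in \cite{CL23}). The paper follows Gulliver--Lawson and sets $\tilde g = r^{-2}g$ where $r(x) = \mathrm{dist}_{\R^4}(0,x)$; the stability eigenfunction enters only indirectly, through the Gauss equation $R_g = -|A|^2$ and the stability inequality, to yield the spectral bound $\lambda_1^N\bigl(-\tilde\Delta + \tfrac12\tilde R\bigr)\geqslant \tfrac34$. The distance-based factor is essential: it turns Euclidean annuli $\{\rho \leqslant r \leqslant e^{L}\rho\}$ into regions of bounded $\tilde g$-diameter, so that a warped $\mu$-bubble placed in such an annulus produces a compact separating $2$-sphere. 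With $\tilde g = u^2 g$ there is no control on how $u$ compares to $r$, hence no reason the $\mu$-bubble traps a bounded region around $B_{\R^4}(0,\rho)$. Relatedly, ``nonnegative Ricci in a warped-product sense'' pertains to the product $(\Sigma\times\R, g + u^2\,dt^2)$, which is a different construction and is not what the $\mu$-bubble needs here; the relevant hypothesis is the Neumann spectral bound above.

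\textbf{Missing structural steps.} Two ingredients are absent from your Step 1. First, before the $\mu$-bubble can be used you need the one-endness of $\Sigma$ (Section 3 of the paper, via the CSZ harmonic-function argument adapted to the wedge using reflection); otherwise the separating surface produced by the $\mu$-bubble need not bound a compact region containing $B_{\R^4}(0,\rho)$. Second, the passage from the area bound on the $\mu$-bubble surface $\Xi$ to the volume bound on $\Sigma_0$ is not a coarea computation. The $\mu$-bubble gives one separating surface, not a foliation. The paper instead bounds $|\Xi|_g\leqslant C\rho^2$ and then invokes the free boundary isoperimetric inequality of Li--Wang--Wu, $|\Sigma_0|_g\leqslant C|\Xi|_g^{3/2}$, for minimal hypersurfaces with free boundary outside a convex body. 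To put itself in that setting, the paper first reflects $\Sigma$ across the faces $\lceil\pi/\theta\rceil$ times so the resulting wedge has angle in $(\pi,2\pi]$ and a convex complement; this step is also missing in your proposal and is where the hypothesis $\theta\in(0,\pi]$ is used. (Your Step 2 exponent is also off: with $q<\sqrt{2/3}<3/2$ the bound $CR^{3-2q}$ grows rather than decays; the correct SSY exponent should exceed $3/2$. But this step is delegated to \cite{MW23} in the paper and is not part of its own argument.)

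\textbf{What is right.} Your ``principal obstacle'' paragraph correctly isolates the genuine delicate points near the edge, and the paper's treatment matches your instincts: the local reflection across $\partial^F\Omega$ shows $\partial^F\Sigma$ is totally geodesic and gives Neumann data, the $\mu$-bubble is shown to be properly embedded by a first-variation/touching argument at the edge, and the $C^{1,\alpha}$-to-edge regularity is handled by a logarithmic cutoff vanishing near $\partial^E\Pi$ since the edge has codimension $2$.
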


We adapt the construction of weighted free boundary $\mu$-bubbles in \cite{CL23}. First of all, in the same spirit in \cite{CL23}, we carry out the conformal deformation technique used by Gulliver--Lawson \cite{GL86}. By the local reflection argument, we find that free boundaries are totally geodesic with respect to the blow-up metric. Besides, in our setting, the weighted function $u$ should satisfy the Neumann condition on the boundary and have $C^{1,\alpha}$ regularity near the edge. 

Note that in \cite[Lemma 21]{CL23}, the properly embeddedness of free boundary $\mu$-bubbles comes from the maximum principle because both constrained boundaries and free boundary $\mu$-bubbles are minimizers of the same functional. However, due to the existence of the edge, the properly embeddedness here is a delicate issue. Suppose that $\Xi$ is a stable free boundary $\mu$-bubble in $\Sigma$. Touching phenomenon consists of two cases \cite{MW23}:
\begin{itemize}
    \item the interior $\mathring{\Xi}$ touches the face $\partial^F \Sigma$;
    \item the face $\partial^F \Xi$ touches the edge $\partial^E \Sigma$.
\end{itemize}
By the Neumann condition on $u$ and the totally geodesic property of free boundaries, we find that the first case cannot happen. As for the second case, if happened, we can construct a vector field along which the first variation of $\Xi$ is negative, which violates the stability of $\Xi$.

Finally, combining the log-cutoff trick and the fact that free boundaries are totally geodesic, boundary integrals in the second variation of a $\mu$-bubble vanish and we obtain the volume estimate following \cite{CL23}.

The remainder of the paper is organized as follows. In Section 2, we review several basic concepts which is borrowed from \cite{MW23}. In Section 3, by the local reflection argument, we prove the one-end theorem following \cite{CSZ97}. In Section 4 and 5, we construct a free boundary $\mu$-bubble and obtain the volume control.

{\bf{Acknowledgements.}} I would like to thank Dr. Tongrui Wang, Dr. Liangjun Weng, Nan Wu and Dr. Xingyu Zhu for their encouragement and useful discussion. Thank Dr. Yaoting Gui for reviewing the draft.
\section{Preliminary}
\subsection{Locally wedge-shaped hypersurfaces.}
To begin with, let us fix some notations in the Euclidean space. Let $H^m_+$ and $H^m_{-}$ be two closed half spaces in $\R^m$, where $m\in \{2,3, \cdots\}$. $\Omega^m:=H^m_+ \cap H^m_-$ is called an $m$-dimensional {\textit{wedge domain}} if it has non-empty interior. By rotating, we can always write $\Omega^m$ in the form of
\begin{equation*}
    \Omega^m=\Omega^2_{\theta}\times \R^{m-2}=\mathrm{Clos}\left(\left\{x\in \R^m: x_1>0, x_2\in (\tan (-\frac{\theta}{2})x_1, \tan (\frac{\theta}{2})x_1)\right\}\right),
\end{equation*}
where $\theta\in (0,\pi]$ is called the {\textit{wedge angle}} of $\Omega^m$. We usually use the notation $\Omega^m_{\theta}$ to emphasize its angle. 

\begin{definition}[Stratification]
    For a non-trivial $m$-dimensional wedge domain $\Omega^m_{\theta}$; i.e. $\theta\in (0,\pi)$, define the stratification $\partial_{m-2}\Omega\subset \partial_{m-1}\Omega \subset \partial_{m}\Omega$ of $\Omega^m_{\theta}$ by
    \begin{equation*}
        \partial_{m}\Omega:=\Omega, \quad \partial_{m-1}\Omega:=\partial \Omega,\quad \partial_{m-2}\Omega:=
\{0\}\times \R^{m-2}.
    \end{equation*}
Then we define
\begin{itemize}
    \item $\mathring{\Omega}:=\partial_{m}\Omega\backslash \partial_{m-1}\Omega$ to be the interior of $\Omega$;
    \item $\partial^{F}\Omega:=\partial_{m-1}\Omega\backslash \partial_{m-2}\Omega$ to be the face of $\Omega$;
    \item $\partial^{E}\Omega:=\partial_{m-2}\Omega$ to be the edge of $\Omega$.
\end{itemize}
\end{definition}
Next, we introduce the notations for hypersurfaces that are locally modeled by wedge domains.
\begin{definition}[Locally wedge-shaped hypersurfaces]
We say that $M^n\subset \Omega^{n+1}_{\theta}$ is an embedded locally wedge-shaped hypersurface of $\Omega^{n+1}_{\theta}$, if for any $p\in M^n$, there exists $R=R(p)>0$ and a diffeomorphism $\psi=\psi_p: \B^{n+1}_R(0)\to \B^{n+1}_R(p)$ so that 
\begin{itemize}
    \item $\psi(0)=p$ and the tangent map $(D\psi)_0\in O(n+1)$ is an orthogonal transformation;
    \item $\psi\left(\left(\Omega(p)\times \{0\}\right)\cap \B^{n+1}_R(0)\right)=M\cap\B^{n+1}_R(p) $, where
    \begin{equation*}
        \Omega(p)=\left\{
        \begin{array}{cc}
           \R^n,  & p\in \mathring{M}^n, \\
            \Omega^n_{\theta}(p), & p\in \partial M^n,
        \end{array}
        \right.
    \end{equation*}
    for some $n$-dimensional wedge domain $\Omega^n_{\theta}(p)$ with $\theta(p)\in (0,\pi]$.
\end{itemize}
We call $(\psi_p, \B^{n+1}_R(p), \Omega(p))$ a local model of $M^n$ around $p$, and call $\theta$ the wedge angle of $M^n$ at $p\in \partial M$. Additionally, given $l\in \Z_+$ and $\alpha\in (0,1]$, if for any $p\in \partial M$ with $\theta(p)\in (0,\pi)$, $\psi_p$ is globally a $C^{l,\alpha}$-diffeomorphism, and is a $C^{\infty}$-diffeomorphism in $(\R^n\times\{0\})\backslash \partial^E\Omega(p)$, then we say $M^n$ is a $C^{l,\alpha}$-to-edge locally wedge-shaped $n$-hypersurface; see \cite[Section 2]{MW23} for more details.
\end{definition}
Similarly, an embedded locally wedge-shaped hypersurface can be stratified using its local models.
\begin{definition}
    Let $M^n\subset \Omega^{n+1}$ is an embedded locally wedge-shaped hypersurface of $\Omega^{n+1}$. We define the stratification $\partial_{m-2}M\subset \partial_{m-1}M\subset \partial_{m}M$ of $M$ by
    \begin{equation*}
        \partial_{n}M:=M, \quad \partial_{n-1}M:=\partial M,\quad \partial_{n-2}M:=\bigcup_{p\in \partial M}\psi_p\left(\left(\partial_{n-2}\Omega^n_{\theta}(p)\times \{0\}\right)\cap \B_R^{n+1}(0)\right),
    \end{equation*}
where $(\psi_p, \B^{n+1}_R(p), \Omega(p))$ is a local model of $M^n$ around $p$. Moreover, we define
\begin{itemize}
    \item $\mathring{M}:=\partial_{n}M\backslash \partial_{n-1}M$ to be the interior of $M$;
    \item $\partial^{F}M:=\partial_{n-1}M\backslash \partial_{n-2}M$ to be the face of $M$;
    \item $\partial^{E}M:=\partial_{n-2}M$ to be the edge of $M$.
\end{itemize}
\end{definition}
Using the stratification of locally wedge-shaped hypersurfaces, we now introduce the definition of almost properly embedding.
\begin{definition}[Almost properly embedding]
   Let $M^n\subset \Omega^{n+1}$ is an embedded locally wedge-shaped hypersurface of $\Omega^{n+1}$. We say that $M^n$ is almost properly embedded in $\Omega^{n+1}$, denoted by $(M^n, \{\partial_n M^n\})\subset (\Omega^{n+1}, \{\partial_{n+1}\Omega^{n+1}\})$, if
   \begin{equation*}
       \partial_i M^n\subset \partial_{i+1}\Omega^{n+1}, \quad i\in \{n-2, n-1,n\}.
   \end{equation*}
   In particular, if $\partial^F M^n=\partial^F \Omega\cap M $ and $\partial^E M^n=\partial^E \Omega\cap M$, then we say that $(M^n, \{\partial_n M^n\})\subset (\Omega^{n+1}, \{\partial_{n+1}\Omega^{n+1}\})$ is properly embedded.
\end{definition}
In the sequel, we will only consider properly embedded locally wedge-shaped hypersurfaces.

\subsection{Variations for locally wedge-shaped hypersurfaces.}
We now consider the variational problem for free boundary minimal hypersurfaces (FBMHs) properly embedded in $\Omega^{n+1}_{\theta}$. Let $\Sigma^n\subset \Omega^{n+1}$ is an embedded locally wedge-shaped hypersurface of $\Omega^{n+1}$. By Stokes' theorem, the first variation formula of the area functional is given by
\begin{equation*}
    \delta \mathcal{A}_{\Sigma}(X)=-\int_{\Sigma}\langle H_{\Sigma}, X\rangle \mathrm{dvol}_{\Sigma} +\int_{\partial^F \Sigma}\langle \eta, X\rangle \mathrm{dvol}_{\partial ^F\Sigma},
\end{equation*}
where $\eta$ denotes the unit outward co-normal along $\partial^F \Sigma$ and $X$ is an admissible vector field defined in \cite[Definition 3.3]{MW23}. Moreover, we have the following characterization.
\begin{proposition}[\cite{MW23}]
    A properly embedded hypersurface $(\Sigma^n, \{\partial_n \Sigma^n\})\subset (\Omega^{n+1}, \{\partial_{n+1}\Omega^{n+1}\})$ is a FBMH if and only if the following two conditions hold:
    \begin{itemize}
        \item $H_{\Sigma}\equiv 0$ in $\mathring{\Sigma}$,
        \item $\eta\perp \partial^F \Omega$ on $\partial^F \Sigma$.
    \end{itemize}
\end{proposition}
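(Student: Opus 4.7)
The plan is to deduce both implications directly from the first variation formula
\begin{equation*}
\delta\mathcal{A}_\Sigma(X) = -\int_\Sigma \langle H_\Sigma, X\rangle\,\mathrm{dvol}_\Sigma + \int_{\partial^F\Sigma}\langle\eta,X\rangle\,\mathrm{dvol}_{\partial^F\Sigma}
\end{equation*}
stated above, combined with a judicious choice of test vector fields within the admissible class (namely, smooth vector fields on a neighborhood of $\Sigma$ that are tangent to $\partial^F\Omega$ on $\partial^F\Sigma$ and tangent to $\partial^E\Omega$ on $\partial^E\Sigma$, in the sense of \cite[Definition 3.3]{MW23}).

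For the forward direction, assume $H_\Sigma\equiv 0$ and $\eta\perp\partial^F\Omega$ along $\partial^F\Sigma$. For any admissible $X$, the bulk integral vanishes since $H_\Sigma=0$, and the boundary integrand $\langle\eta,X\rangle$ vanishes pointwise because $X|_{\partial^F\Sigma}\in T\partial^F\Omega$ is orthogonal to $\eta$. Hence $\delta\mathcal{A}_\Sigma(X)=0$ for every admissible $X$, so $\Sigma$ is a FBMH.

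For the reverse direction, first test against vector fields $X$ smooth and compactly supported in $\mathring\Sigma$; these are automatically admissible and kill the boundary term, leaving $\int_\Sigma\langle H_\Sigma,X\rangle\,\mathrm{dvol}_\Sigma=0$ for all such $X$. A standard cutoff argument (e.g.\ taking $X=\varphi H_\Sigma$ with $\varphi\geq 0$ supported in $\mathring\Sigma$) together with the fundamental lemma of the calculus of variations yields $H_\Sigma\equiv 0$ in $\mathring\Sigma$. Inserting this back into the first variation, one is left with $\int_{\partial^F\Sigma}\langle\eta,X\rangle\,\mathrm{dvol}_{\partial^F\Sigma}=0$ for every admissible $X$. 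Decomposing $\eta=\eta^T+\eta^N$ along $\partial^F\Sigma$ with $\eta^T$ the orthogonal projection onto $T\partial^F\Omega$, I would show $\eta^T\equiv 0$ as follows: given any bump $\varphi\geq 0$ compactly supported in $\partial^F\Sigma\setminus\partial^E\Sigma$, the vector field $\varphi\,\eta^T$ admits an admissible extension into $\Omega$ (tangent to $\partial^F\Omega$ by construction, supported away from the edge, and hence trivially tangent to $\partial^E\Omega$ there); testing with this extension gives $\int_{\partial^F\Sigma}\varphi|\eta^T|^2=0$, forcing $\eta^T=0$ on $\partial^F\Sigma\setminus\partial^E\Sigma$.

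The one delicate point is the behavior near $\partial^E\Sigma$, since the test fields above must be cut off to stay admissible. This is where the $C^{1,1}$-to-edge regularity enters: it guarantees that $\eta$ extends continuously to $\partial^E\Sigma$, while $\partial^E\Sigma$ has codimension one inside $\partial\Sigma$. The identity $\eta^T\equiv 0$ on $\partial^F\Sigma\setminus\partial^E\Sigma$ therefore propagates to all of $\partial^F\Sigma$ by continuity, yielding $\eta\perp\partial^F\Omega$ everywhere on $\partial^F\Sigma$. The only real technical obstacle is constructing the admissible extension of $\varphi\,\eta^T$ off the face while simultaneously respecting the face and edge tangency conditions, which is precisely why the cutoff is taken compactly supported away from $\partial^E\Sigma$ before invoking continuity.
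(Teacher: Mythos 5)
The paper does not prove this proposition; it is quoted verbatim from \cite{MW23}, so there is no in-paper argument to compare against. That said, your variational argument is the standard and essentially correct one: interior test fields compactly supported in $\mathring\Sigma$ kill the boundary term in the first variation and yield $H_\Sigma\equiv 0$ via $X=\varphi H_\Sigma$, and then admissible face-tangential test fields $\varphi\,\eta^T$ (with $\varphi$ a bump supported away from the edge) force $\int_{\partial^F\Sigma}\varphi|\eta^T|^2=0$, hence $\eta^T\equiv 0$ on $\partial^F\Sigma$. The forward direction is immediate from the same formula.

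One point worth tightening: your closing paragraph about propagating $\eta^T\equiv 0$ ``by continuity from $\partial^F\Sigma\setminus\partial^E\Sigma$ to all of $\partial^F\Sigma$'' is vacuous. By the paper's stratification, $\partial^F\Sigma=\partial_{n-1}\Sigma\setminus\partial_{n-2}\Sigma$ already excludes the edge, so $\partial^F\Sigma\setminus\partial^E\Sigma=\partial^F\Sigma$ and the bump-function argument by itself already covers the entire face as $\varphi$ ranges over all such bumps. There is no extra set to reach. The genuine role of the $C^{1,1}$-to-edge hypothesis here is elsewhere: it is what guarantees that the first variation identity itself holds in the stated form, i.e.\ that the codimension-two edge $\partial^E\Sigma$ contributes no boundary term when one integrates by parts and that the conormal $\eta$ is well defined and bounded up to the edge. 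With that understood, your proof is a correct and faithful reconstruction of the intended argument.
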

Next, we investigate the stability of locally wedge-shaped hypersurfaces. Suppose that $\Sigma$ is two-sided so that it admits a continuous choice of normal vector $\nu$.
\begin{proposition}[\cite{MW23}]
    Assume that $\Sigma$ is a FBMH. Suppose that the admissible vector field $X$ restricts to vector field $f\nu$ on $\Sigma$. Then the second variation of the area functional is given by
    \begin{equation}\label{second}
        \delta^2 \mathcal{A}_{\Sigma}(X)=\int_{\Sigma} \left(|\nabla f|^2-|A|^2f^2\right)\mathrm{dvol}_{\Sigma}
    \end{equation}
    where $A$ denotes the second fundamental form of $\Sigma$ and $\eta$ is the unit outward co-normal along $\partial \Sigma$.
\end{proposition}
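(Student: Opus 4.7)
The plan is to execute the standard second-variation computation for a hypersurface with boundary, and then to observe that the flatness of a Euclidean wedge annihilates every term in the resulting expression except the two appearing in \eqref{second}. Let $\Phi_t: \Sigma \to \Omega^{n+1}$ be a one-parameter family of admissible variations generating $X$; by the admissibility hypothesis of \cite[Definition 3.3]{MW23}, $\Phi_t$ preserves the stratification, so $\Phi_t(\partial^F \Sigma) \subset \partial^F \Omega$ and $\Phi_t(\partial^E \Sigma) \subset \partial^E \Omega$. I would differentiate the area twice at $t = 0$ using the Gauss equation, the minimality $H_\Sigma = 0$, and the orthogonality $\eta \perp \partial^F \Omega$, to arrive at the preliminary identity
\begin{equation*}
\delta^2 \mathcal{A}_\Sigma(X) = \int_\Sigma \left(|\nabla f|^2 - |A|^2 f^2 - \Ric^{\Omega}(\nu, \nu) f^2 \right)\mathrm{dvol}_\Sigma - \int_{\partial^F \Sigma} II^{\partial^F \Omega}(\nu, \nu) f^2 \, \mathrm{dvol}_{\partial^F \Sigma},
\end{equation*}
where $II^{\partial^F \Omega}$ denotes the second fundamental form of the face $\partial^F \Omega$ viewed as a hypersurface in $\R^{n+1}$.

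Two simplifications then reduce this to the claimed formula. First, because $\Omega \subset \R^{n+1}$ is flat, $\Ric^{\Omega} \equiv 0$ and the Ricci term vanishes. Second, and crucially, the face $\partial^F \Omega$ is a union of flat half-hyperplanes in $\R^{n+1}$, so $II^{\partial^F \Omega} \equiv 0$; this kills the boundary integral along $\partial^F \Sigma$. (Note that this is precisely where the wedge geometry is being used: the identity would not survive if the ambient boundary were curved.) The free-boundary orthogonality together with $\nu \perp \eta$ ensures that $\nu$ is tangent to $\partial^F \Omega$ along $\partial^F \Sigma$, so the expression $II^{\partial^F \Omega}(\nu,\nu)$ is at least meaningful before it is shown to vanish.

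The delicate point I expect to be the main obstacle is the behavior near the edge $\partial^E \Sigma$. Since $\Sigma$ is only $C^{1,1}$-to-edge, the integration by parts that underlies the second-variation computation is not automatically valid across $\partial^E \Sigma$, and in principle a new codimension-two boundary contribution could appear. To handle this I would exhaust $\Sigma$ by the open sets $\Sigma_\varepsilon = \{p \in \Sigma : \mathrm{dist}(p, \partial^E \Sigma) > \varepsilon\}$, carry out the calculation on each $\Sigma_\varepsilon$ (where all fields are smooth and the reasoning above applies verbatim), and then pass to the limit $\varepsilon \to 0$. Because $\partial^E \Sigma$ has codimension two in $\Sigma$ and $X$ is tangent to $\partial^E \Omega$ along the edge by admissibility, a standard $2$-capacity estimate shows that the integrals over the thin tubular shells around $\partial^E \Sigma$ tend to zero. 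Combining this with the vanishing of the face term yields the formula \eqref{second}.
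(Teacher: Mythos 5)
Your proposal is correct and takes essentially the same approach the paper does: the paper itself outsources the full computation to \cite{MW23} and, in the remark following the proposition, records that the boundary integral present in the general \cite{MW23} formula vanishes for a properly embedded FBMH in a flat wedge via a reflection argument, which is exactly the observation you make directly by noting $\Ric^{\Omega}\equiv 0$ and $II^{\partial^F\Omega}\equiv 0$ (the reflection across $\partial^F\Omega$ is an isometry precisely because the face is a piece of a flat hyperplane, i.e.\ has vanishing second fundamental form). Your explicit $2$-capacity exhaustion near $\partial^E\Sigma$, together with the remark that the free-boundary orthogonality forces $\nu$ to lie tangent to $\partial^E\Omega$ so that $X=f\nu$ is admissible along the edge, makes precise a point the paper leaves implicit in the $C^{1,1}$-to-edge hypothesis.
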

\begin{remark}
    The boundary integral in the original form vanishes due to the properly embedding and a reflection technique used in \cite[Theorem 5.3]{MW23}.
\end{remark}

Similar to \cite[Lemma 1.36]{CM11}, the stability of $\Sigma$:
\begin{equation}
        \delta^2 \mathcal{A}_{\Sigma}(X)=\int_{\Sigma} \left(|\nabla f|^2-|A|^2f^2\right)\mathrm{dvol}_{\Sigma}\geqslant 0
    \end{equation}
is equivalent to the existence of a positive function $w$ on $\Sigma$ satisfying
\begin{equation}\label{stability}
\begin{split}
\left(\Delta_{\Sigma}+|A|^2\right)(w)&=0, ~~\mbox{in}~~ \Sigma, \\
\frac{\partial w}{\partial {\eta}}&=0, ~~\mbox{on}~~ \partial \Sigma.
    \end{split}
\end{equation}
By the well-known construction, the universal cover of $\Sigma$ is also a FBMH because the canonical projection map is locally isometric. Moreover, if $\Sigma$ is a stable FBMH, the universal cover $\overline{\Sigma}$ is also stable because the lifting of $w$ satisfies (\ref{stability}) on $\overline{\Sigma}$. Without loss of generality, in the sequel, we assume that $\Sigma$ is simply connected.  

\section{One-endness} 
Suppose that $\Omega^{n+1}=H^{n+1}_+\cap H^{n+1}_{-}$ is a $n+1$-dimensional wedge domain in $\R^{n+1}$ with wedge angle $\theta$, and $\left(\Sigma^n, \left\{\partial_n \Sigma\right\}\right)\subset \left(\Omega^{n+1}, \left\{\partial_{n+1} \Omega\right\}\right)$ is a stable $C^{1,1}$-to-edge properly embedded FBMH. We define
\begin{equation*}
    \partial^{F}_{+}\Sigma:=\partial^{F}\Sigma\cap \partial H^{n+1}_+, \quad \partial^{F}_{-}\Sigma:=\partial^{F}\Sigma\cap \partial H^{n+1}_-.
\end{equation*}
for all $p\in \partial^F \Sigma$. We can assume that $p\in \partial^F_+ \Sigma$. Let $\tilde{\Sigma}$ be the hypersurface consisting of $\Sigma$ together with its reflection across $\partial H^{n+1}_{+}$. Then $\tilde{\Sigma}$ is a smooth minimal hypersurface with no boundary in a neighborhood of $p$ in $\R^{n+1}$.

We claim that $\partial^{F}_{+}\Sigma$ and $\partial^{F}_{-}\Sigma$ are totally geodesic in $(\Sigma, g)$ with induced metric $g$. We extend $e_i\in T_p \partial^{F}_{+}\Sigma$, $i=\{1,\cdots, n\}$, to be an orthonormal frame in a neighborhood of $p$ in $\tilde{\Sigma}$. Let $\gamma:(-1,1)\to \tilde{\Sigma}$ be a curve that
\begin{equation*}
    \gamma(0)=p,\quad \gamma^{'}(0)=\eta,\quad  \gamma(t)=R(\gamma(-t)),
\end{equation*}
where $R$ denotes reflection across $\partial H^{n+1}_{+}$. Note that the function
\begin{equation*}
    f(t):=\langle \nabla^g_{e_i}e_j, \gamma^{'}\rangle (\gamma(t)), \quad i,j=\{1,\cdots, n\},
\end{equation*}
is an odd function, which implies that for $i,j=\{1,\cdots, n\}$
\begin{equation*}
    f(0)=\langle \nabla^g_{e_i}e_j, \eta\rangle=0.
\end{equation*}
Therefore, we conclude that $\partial^{F}_{+}\Sigma$ and $\partial^{F}_{-}\Sigma$ are totally geodesic. 
 
By analyzing harmonic functions on $\Sigma^n$, we will show that $\Sigma^n$ has only one end. In the sequel, without loss of generality, we may assume that $0\in \partial^{E}\Sigma^n$.

\begin{lemma}\label{volume}
    Every end of $\Sigma^n$ has infinite volume. 
\end{lemma}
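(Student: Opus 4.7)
The plan is to use the monotonicity formula for free boundary minimal hypersurfaces in the convex wedge $\Omega^{n+1}_\theta$, where convexity holds because $\theta\in(0,\pi]$. Fix a compact $K\subset \Sigma$ and let $E$ be an unbounded connected component of $\Sigma\setminus K$; the aim is to show $|E|_g=\infty$.

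The key observation is that intrinsic distance on $\Sigma$ dominates Euclidean distance, so for any $p\in E$ with $R:=d_{\R^{n+1}}(p,K)>0$, the intrinsic ball $B^\Sigma_R(p)$ is disjoint from $K$; being intrinsically connected and containing $p\in E$, it lies inside $E$. Since $E$ is unbounded in $\R^{n+1}$ and $K$ is compact, I can pick a sequence $p_j\in E$ with $R_j:=d_{\R^{n+1}}(p_j,K)\to\infty$. The problem then reduces to the uniform volume lower bound
\[
|B^\Sigma_{R_j}(p_j)|_g \;\geq\; c(\theta)\,R_j^n,
\]
with a positive constant $c(\theta)$; combined with $|E|_g\geq |B^\Sigma_{R_j}(p_j)|_g$, this yields the desired infinite-volume conclusion as $j\to\infty$.

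The nontrivial part is establishing this density bound, which amounts to a monotonicity formula for free boundary minimal hypersurfaces in the convex wedge. In the interior of $\Sigma$ this is the classical Euclidean monotonicity. At a face point of $\partial^F\Sigma$, the totally geodesic property just proved allows reflection across $\partial H^{n+1}_+$ or $\partial H^{n+1}_-$, producing a boundaryless minimal hypersurface to which standard Euclidean monotonicity applies. The hard part will be the behavior near the edge $\partial^E\Omega$, where single reflections do not close up the surface into a smooth minimal hypersurface. I would invoke the wedge monotonicity derived in \cite{MW23}: convexity of $\Omega^{n+1}_\theta$ ensures that the boundary terms in the first variation of the radial vector field $x-p_j$ have a favorable sign, so that a uniform density lower bound $c(\theta)>0$, independent of $j$, survives.
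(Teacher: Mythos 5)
Your high-level plan (take a sequence of points $p_j\in E$ escaping $K$, show a uniform density lower bound $|B^\Sigma_{R_j}(p_j)|\geq c(\theta)R_j^n$, and conclude $|E|=\infty$) is similar in spirit to the paper's treatment of the two \emph{easy} cases (end disjoint from the edge, or touching only one face), but it hides a genuine gap exactly at the point you flag as ``the hard part.''

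The claim that convexity of $\Omega_\theta$ makes the boundary terms in the first variation of $x-p_j$ have a ``favorable sign'' is false, and this is not merely a technicality. At a face point $x\in\partial^F\Sigma$, the outward conormal $\eta$ of $\Sigma$ agrees (by the free boundary condition) with the outward normal $n_{\mathrm{out}}$ of $\Omega$, and for a convex domain with $p_j$ in the interior one has $\langle x-p_j,\,n_{\mathrm{out}}\rangle\geq 0$. In the monotonicity identity
\[
n\,|\Sigma_R| \;=\; \int_{\text{sphere part}}\langle x-p_j,\nu\rangle \;+\;\int_{\text{face part}}\langle x-p_j,n_{\mathrm{out}}\rangle,
\]
this face term has the \emph{wrong} sign: monotonicity of $|\Sigma_R|/R^n$ requires it to be $\leq 0$, not $\geq 0$. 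You can check this concretely in the half-space $\{x_3\geq 0\}$ with $\Sigma=\{x_1=0,\,x_3\geq 0\}$ and $p=(0,0,1)$: the density ratio decreases from $1$ toward $1/2$. So ``favorable sign'' does not yield monotonicity, and without an additional argument (multi-reflection, a Gr\"uter--Jost-type modified vector field, or a GMT density bound at corners) you have not established the needed lower bound, especially once the intrinsic ball $B^\Sigma_{R_j}(p_j)$, of radius $R_j\to\infty$, eventually reaches the edge.

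This is precisely where the paper's proof diverges from yours. Its key move in the hard case (the end touching \emph{both} faces) is to center the distance function $r$ at a point $0\in\partial^E\Sigma$ lying on the edge. Because the edge is contained in both hyperplanes $\partial H^{n+1}_\pm$, every face point $x$ satisfies $\langle x-0,\,n_\pm\rangle=0$, so the face integrals vanish \emph{identically} rather than being merely sign-controlled. Combined with the evenness of $r^2$ under reflection to justify $\partial r^2/\partial\eta_\pm=0$, and the sign of the inner boundary integral, this gives the monotonicity of $s^{-n}\mathrm{vol}(B_\Sigma(s)\cap E)$ cleanly. In the other two cases the paper reflects to a boundaryless minimal hypersurface and applies \cite{CSZ97}; this uses smoothness after reflection and so is only available away from the edge. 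Your argument conflates the three cases by centering at arbitrary $p_j$, which is exactly where the sign obstruction lives. To fix the proposal you would either need to prove (not just cite) a density lower bound for free boundary stationary varifolds in a wedge at arbitrary centers and scales, or adopt the paper's centering-at-the-edge trick.
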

\begin{proof}
    We let $r(x)=\mathrm{dist}_{\R^{n+1}}(0,x)$ be the distance function of $\R^{n+1}$, and $\bar{r}(x)=\mathrm{dist}_{\Sigma}(0,x)$ be the distance function on $\Sigma$ with respect to the induced metric. Choosing $s_0$ large enough so that 
\begin{equation*}
    \Sigma \backslash \B_{\R^{n+1}}(s_0)= \bigcup_{j=1}^k E_j, \quad k \geqslant 1
\end{equation*}
is the disjoint union of noncompact connected components. By Sard's theorem, we can assume that $\B_{\R^{n+1}}(s_0)$ intersects with each $E_j$ transversally. Without loss of generality, we take $E=E_1$ as an example. There are three mutually exclusive possibilities: $E$ admits the boundary which 
\begin{enumerate}
    \item doesn't intersect with $\partial H^{n+1}_+\cup \partial H^{n+1}_{-}$;
    \item intersects with $\partial H^{n+1}_+$ or $\partial H^{n+1}_{-}$ merely;
    \item intersects with both $\partial H^{n+1}_+$ and $\partial H^{n+1}_{-}$.
\end{enumerate}
In the second case, without loss of generality, we can assume that $E$ intersects with $\partial H^{n+1}_+$. Let $\tilde{E}$ be the hypersurface consisting of $E$ together with its reflection across $\partial H^{n+1}_{+}$. For convenience, in the first and third case, let $\tilde{E}$ be $E$.

Let $\B_{\Sigma}(s)$ be the geodesic ball contained in $\tilde{E}$, of radius $s$ centered at the origin.
By Lemma 1 in \cite{CSZ97}, we know that $s^{-n}\mathrm{Vol}(B_{\Sigma}(s))$ is non-decreasing. Therefore
\begin{equation*}
    \frac{\mathrm{Vol}(B_{\Sigma}(s))}{s^n}\geqslant \lim_{s\to 0} \frac{\mathrm{Vol}(B_{\Sigma}(s))}{s^n}=\omega(n),
\end{equation*}
where $\omega(n)$ is the volume of unit ball in $\R^n$. If $E$ has finite volume, it implies that $\tilde{E}$ has finite volume as well. In the first and second cases, we can choose $R$ big enough such that
\begin{equation*}
    \mathrm{Vol}(\tilde{E})\geqslant \mathrm{Vol}(B_{p}(s))\geqslant \omega(n)R^n> \mathrm{Vol}(\tilde{E}),
\end{equation*}
a contradiction.

We follow the argument in \cite[Lemma 1]{CSZ97} to deal with the third case.  As shown in \cite{CSZ97}, we have
\begin{equation}\label{bound}
    \frac{\partial r}{\partial \bar{r}}\leqslant 1.
\end{equation}
By a direct computation and using the fact that $\Sigma$ is minimal, we can show that in the interior of $\Sigma$,
\begin{equation}\label{distance}
    \Delta_{\Sigma} r^2(x)=2n.
\end{equation}
Note that the radial function $r^2(x)$ can be extended to be an even function on $\tilde{\Sigma}$ around any point $p$ on $\partial^F \Sigma$. Similarly, we can conclude that
\begin{equation}\label{vanish}
    \frac{\partial r^2}{\partial \eta_{\pm}}(p)=0 ~~\mbox{on}~~ \partial H^{n+1}_{\pm},
\end{equation}
where $\eta_{\pm}$ are the unit outward co-normal along $\partial H^{n+1}_{\pm}$.

Due to (\ref{vanish}), integrating (\ref{distance}) over $\B_{\Sigma}(s)\cap E$ yields
\begin{equation}\label{int}
    2n\mathrm{vol}(\B_{\Sigma}(s)\cap E)=\int_{\partial \B_{\Sigma}(s)\cap \mathring{E}} \frac{\partial r^2}{\partial \eta_1} \mathrm{dvol}+\int_{\partial E\cap \overline{\B}_{\R^{n+1}}(s_0)} \frac{\partial r^2}{\partial \eta_2} \mathrm{dvol},
\end{equation}
where $\eta_1$ and $\eta_2$ are the unit outward co-normal vector fields along $\partial \B_{\Sigma}(s)\cap \mathring{E}$ and $\partial E\cap \overline{\B}_{\R^{n+1}}(s_0)$, respectively.

By our assumption, $\B_{\R^{n+1}}(s_0)$ intersects with $E$ transversally. It implies that $\eta_2$ points inward the ball $\B_{\R^{n+1}}(s_0)$ along $\partial E\cap \overline{\B}_{\R^{n+1}}(s_0)$. Therefore,
\begin{equation}\label{negative}
    \int_{\partial E\cap \overline{\B}_{\R^{n+1}}(s_0)} \frac{\partial r^2}{\partial \eta_2} \mathrm{dvol}\leqslant 0.
\end{equation}

Similarly, we have
\begin{equation*}
    2n\mathrm{vol}(\B_{\Sigma}(s)\cap E)\leqslant 2s \mathrm{vol}(\partial \B_{\Sigma}(s)\cap \mathring{E}).
\end{equation*}
Note that 
\begin{equation*}
    \mathrm{vol}(\partial \B_{\Sigma}(s)\cap \mathring{E})=\frac{\partial}{\partial t}\Big|_{t=s}\mathrm{vol}(\B_{\Sigma}(t)\cap E).
\end{equation*}
In the same spirit as \cite[Lemma 1]{CSZ97}, we conclude that the quantity $s^{-n}\mathrm{vol}( \B_{\Sigma}(s)\cap \mathring{E})$ is nondecreasing. Therefore,
\begin{equation*}
    \frac{\mathrm{vol}( \B_{\Sigma}(s)\cap \mathring{E})}{s^n}\geqslant \frac{\mathrm{vol}( \B_{\Sigma}(s_0)\cap \mathring{E})}{s_0^n}>0,
\end{equation*}
for fixed value $s_0$. If $E$ has finite volume, similar to the above, we obtain a contradiction.
\end{proof}

\begin{lemma}\label{harmonic}
Suppose that $\Sigma^n$ has at lease two ends, then here exists a non-constant bounded harmonic function with finite energy on $\Sigma^n$.
\end{lemma}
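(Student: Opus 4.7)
The plan is to adapt the Li--Tam / Cao--Shen--Zhu harmonic function construction to the free-boundary wedge setting. Fix two distinct ends $E_1,E_2$ of $\Sigma^n$. For each $R>s_0$ set $\Omega_R:=\Sigma\cap \B_{\R^{n+1}}(0,R)$, and solve the mixed boundary-value problem: find $u_R\in W^{1,2}(\Omega_R)$ minimizing $\int_{\Omega_R}|\nabla v|^2$ among functions with $v\equiv 1$ on the components of $\partial\Omega_R\setminus\partial^F\Sigma$ lying in $E_1$, $v\equiv 0$ on those lying in the other ends, and no constraint imposed on $\partial^F\Sigma\cap\Omega_R$. A standard direct-method argument yields a unique minimizer $u_R$, which is harmonic in $\mathring{\Omega}_R$ with $\partial u_R/\partial\eta=0$ along $\partial^F\Sigma\cap\Omega_R$; the maximum principle, applied after reflecting across the totally geodesic face, gives $0\leqslant u_R\leqslant 1$.

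Next I would pass to the limit $R\to\infty$. Interior elliptic estimates, combined with boundary estimates near $\partial^F\Sigma$ obtained by doubling across the (totally geodesic) face, supply uniform $C^{1,\alpha}_{\mathrm{loc}}$ bounds away from the edge. Near $\partial^E\Sigma$ the $C^{1,1}$-to-edge regularity of $\Sigma$ together with the local reflection arguments of \cite{MW23} give the required boundary regularity. A diagonal subsequence then converges to a bounded harmonic function $u$ on $\Sigma$ satisfying the Neumann condition $\partial u/\partial\eta=0$ on $\partial^F\Sigma$.

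Non-constancy is forced by the two-ends hypothesis together with the infinite-volume conclusion of Lemma \ref{volume}: the $u_R$ are bounded uniformly away from $0$ on fixed compact pieces of $E_1$ and bounded uniformly away from $1$ on fixed compact pieces of $E_2$, exactly as in the non-parabolicity argument of \cite[Lemma 2]{CSZ97}. Finite energy follows by comparing $\int_{\Omega_R}|\nabla u_R|^2$ with the energy of a fixed compactly supported Lipschitz test function that equals $1$ on a chosen component of $\partial \B_{\R^{n+1}}(0,R_0)\cap E_1$ and $0$ outside a slightly larger ball; this gives a uniform bound independent of $R$, and lower semicontinuity yields $\int_\Sigma|\nabla u|^2<\infty$ for the limit.

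The main obstacle I anticipate is handling the Neumann condition uniformly up to the edge $\partial^E\Sigma$. Away from $\partial^E\Sigma$ the reflection across the totally geodesic face reduces the mixed problem to a standard Dirichlet problem on a doubled manifold, where the existence, regularity, and maximum principle are classical. At the codimension-two edge, however, one must either work with weighted Sobolev spaces adapted to the wedge or carry out an explicit local model computation; the key technical point is to check that the $C^{1,\alpha}$-to-edge regularity of $\Sigma$ is strong enough to make the variational problem well-posed and to prevent spurious boundary terms along $\partial^E\Sigma$ from appearing when integrating by parts. Once this edge regularity is secured, the remainder of the argument is a direct adaptation of \cite[Lemma 2]{CSZ97}.
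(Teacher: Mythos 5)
Your construction of the harmonic function via exhaustion, mixed Dirichlet--Neumann problems, reflection across the totally geodesic face, and the energy-comparison bound all match the paper's adaptation of \cite[Lemma 2]{CSZ97}. The gap is in the non-constancy step, which you assert in a single sentence but which is actually the technical heart of the lemma and where the stability hypothesis is used in an essential way.

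The claim ``the $u_R$ are bounded uniformly away from $0$ on fixed compact pieces of $E_1$ and away from $1$ on $E_2$'' is not a barrier estimate one can invoke directly; it is equivalent to the non-parabolicity of each end, which must be proved. The paper proves it with a Sobolev-inequality argument whose validity in the free-boundary wedge setting is precisely what requires care. Concretely: the paper sets $\phi = u_i(1-u_i)$, introduces the Sobolev trace quotient $\mathcal{Q}_\partial(\Sigma)$, and shows $\mathcal{Q}_\partial(\Sigma)>0$ using (i) the stability inequality combined with the Gauss equation, and (ii) the vanishing of the boundary term in $\mathcal{Q}^g_\partial$, which relies on both the Neumann condition and the totally geodesic faces. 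From this and \cite[Theorem 1.1]{Brendle21} one gets a uniform bound on $\|\phi\|_{L^{2n/(n-2)}(D_i)}$, which rules out $u\equiv c$ for $c\in(0,1)$ since $\mathrm{Vol}(D_i)\to\infty$. A second application of the same inequality, with a cutoff $\psi$ supported near $E_2$, then rules out $u\equiv 0$ and $u\equiv 1$ by contradicting the infinite volume of $E_2$ (Lemma \ref{volume}). Your write-up never invokes stability and never explains why a Sobolev inequality is available up to the free boundary and edge, so the decisive step is left unsupported; simply citing the non-parabolicity argument of \cite{CSZ97} does not discharge it, because the free-boundary version of that argument hinges on the boundary-term cancellation and on Brendle's inequality, neither of which is automatic.
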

\begin{proof}
As constructed in Lemma 2 \cite{CSZ97}, there is an exhausion $\left\{D_i\right\}$ of $\Sigma$ by compact submanifolds with boundary. For $i\geqslant i_0$ and $i_0$ sufficiently large, let
\begin{equation*}
    \Sigma\backslash D_i=\bigcup_{j=1}^{n}E^{(i)}_j,  \quad n \geqslant 1
\end{equation*}
be the disjoint union of connected components.
By Lemma \ref{volume} and the assumption, $\Sigma$ has at least two components with infinite volume. Let $E^{(i_0)}_1$ and $E^{(i_0)}_2$ be two such components. On each compact domain $D_i$, we consider the following Dirichlet--Neumann problem
\begin{equation}\label{DNP}
\left\{
\begin{array}{ll}
\Delta_{\Sigma} u &=0, ~~\mbox{in}~~ \mathring{D_i} \\
u &=1, ~~\mbox{on}~~\partial E^{(i)}_1\\
u &=0, ~~\mbox{on}~~\partial E^{(i)}_j, j\neq 1\\
\frac{\partial u}{\partial \eta}&=0, ~~\mbox{on}~~\partial D_i\cap \partial^F\Sigma,
\end{array}
\right.
\end{equation}
where $\eta$ is the unit conormal to $\partial^F\Sigma$. Let $u_i$ be the unique solution of (\ref{DNP}). By the maximum principle and Hopf lemma, we have $0\leqslant u_i\leqslant 1$ on $D_i$. Moreover, it's easy to see that for $i>j$
\begin{equation*}
    \int_{D_i} |\nabla u_i|^2\mathrm{dvol}_{\Sigma}\leqslant \int_{D_j} |\nabla u_j|^2\mathrm{dvol}_{\Sigma}.
\end{equation*}
For simplicity, in the following arguments, let $C>0$ be a varying uniform constant. Hence, there is a constant $C$ such that 
\begin{equation*}
    \int_{D_i} |\nabla u_i|^2\mathrm{dvol}_{\Sigma}<C.
\end{equation*}
Therefore by passing to a subsequence, still denoted by $u_i$, we can find a harmonic function $u$ on $\Sigma$ satisfying
\begin{equation*}
    \lim_{i\to \infty}u_i=u, \quad \int_{\Sigma} |\nabla u|^2\mathrm{dvol}_{\Sigma}<C, \quad 0\leqslant u\leqslant 1.
\end{equation*}
In the following we prove that the limiting harmonic function $u$ is not a constant function. We will prove this by contradiction.

Setting $\phi=u_i(1-u_i)$. Note that from the construction of $u_i$, $\phi$ vanishes on $\partial E^{(i)}_1\cup \partial E^{(i)}_2$ and has finite energy as well. Besides, $\phi$ satisfies the Neumann condition on $\partial D_i\cap \partial^F\Sigma$. We define the Sobolev trace quotient $\mathcal{Q}_{\partial}(\Sigma)$ by
\begin{equation*}
    \mathcal{Q}_{\partial}(\Sigma)=\inf \left\{\mathcal{Q}^{g}_{\partial}(\varphi):\varphi\in C^1(\Sigma), \varphi\neq 0, \frac{\partial \varphi}{\partial \eta_{\pm}}=0\right\},
    \end{equation*}
where 
\begin{equation*}
    \mathcal{Q}^{g}_{\partial}(\varphi):=\frac{\int_{\Sigma}\left(|\nabla \varphi|^2_g+\frac{n-2}{4(n-1)}R_g \varphi^2\right)\mathrm{dvol}_{\Sigma}+\int_{\partial \Sigma}\left(\varphi\frac{\partial \varphi}{\partial \eta}+\frac{n-2}{2}H_{\partial \Sigma}\varphi^2\right)\mathrm{dvol}_{\partial \Sigma}}{\left(\int_{\partial \Sigma}|\varphi|^{\frac{2(n-1)}{n-2}}\mathrm{dvol}_{\partial \Sigma}\right)^{\frac{n-2}{n-1}}}.
\end{equation*}
The boundary integral in the numerator of $\mathcal{Q}^{g}_{\partial}(\varphi)$ vanishes because $\varphi$ satisfies the Neumann condition and $\partial^F \Sigma$ is totally geodesic. Moreover, by the stability of $\Sigma^n$ and the Gauss equation, we have
\begin{equation*}
    \int_{\Sigma}\left(|\nabla \varphi|^2_g+\frac{n-2}{4(n-1)}R_g \varphi^2\right)\mathrm{dvol}_{\Sigma}>\int_{\Sigma}\left(|\nabla \varphi|^2_g-|A|^2 \varphi^2\right)\mathrm{dvol}_{\Sigma}\geqslant0,
\end{equation*}
which implies that $ \mathcal{Q}_{\partial}(\Sigma)>0$. 

Therefore, we have
\begin{equation}
    \left(\int_{\partial D_i} \phi^{\frac{2(n-1)}{n-2}} \mathrm{dvol}_{\partial D_i}\right)^{\frac{n-2}{2(n-1)}} \leqslant C \left(\int_{D_i}|\nabla \phi|^2\mathrm{dvol}_{ D_i}\right)^{\frac{1}{2}}.
\end{equation}
Combining this with \cite[Theorem 1.1]{Brendle21}, similar to arguments in \cite[Section 3.3]{CL23}, we obtain that
\begin{equation*}
    \left(\int_{D_i} \phi^{\frac{2n}{n-2}}\mathrm{dvol}_{\Sigma}\right)^{\frac{n-2}{n}}\leqslant C \int_{D_i}|\nabla \phi|^2\mathrm{dvol}_{ \Sigma}\leqslant C.
\end{equation*}
Since $\mathrm{Vol}(D_i)$ goes to infinity, it follows that if $u$ is a constant function, then $u\equiv 0$ or $u\equiv 1$. As argued in \cite[Lemma 2]{CSZ97}, we may assume that $u\equiv 1$. We choose a smooth function $\psi$ such that
\begin{equation}\label{psi}
    \psi=\left\{
\begin{array}{cl}
&1, ~~\mbox{in}~~ E^{(i_0)}_2\\
&0, ~~\mbox{in}~~ E^{(i_0)}_j, j\neq 2\\
\end{array}
\right.
\end{equation}
and 
\begin{equation*}
    |\nabla \psi|<C, \quad 0\leqslant \psi \leqslant 1
\end{equation*}
for some constant $C$ independent of $i$ and $u_i$. Note that $|\nabla \psi|$ vanishes outside a compact set.

By (\ref{DNP}) and (\ref{psi}), the function $\psi u_i$ vanishes on on $\partial E^{(i)}_1\cup \partial E^{(i)}_2$. Similar to above, we can claim that 
\begin{equation*}
    \left(\int_{D_i} (\psi u_i)^{\frac{2n}{n-2}}\mathrm{dvol}_{\Sigma}\right)^{\frac{n-2}{n}}\leqslant C. 
\end{equation*}
In particular, 
\begin{equation*}
    \left(\int_{D_i\cap E^{(i_0)}_2} (\psi u_i)^{\frac{2n}{n-2}}\mathrm{dvol}_{\Sigma}\right)^{\frac{n-2}{n}}\leqslant C, \quad ~~\mbox{for all}~~ i\geqslant i_0. 
\end{equation*}
Letting $i\to \infty$, we get $\mathrm{Vol}(E^{(i_0)}_2)\leqslant C$, a contradiction with Lemma \ref{volume}. Therefore, the limiting hamronic function is not a constant function.
\end{proof}
\begin{lemma}\label{estimate}
    Suppose that $\left(\Sigma^3, \left\{\partial_3 \Sigma\right\}\right)\subset \left(\Omega^{4}, \left\{\partial_{4} \Omega\right\}\right)$ is a stable $C^{1,1}$-to-edge properly embedded FBMH and u is a harmonic function on $\Sigma$. Then
    \begin{equation*}
    \left(1-\frac{1}{\sqrt{2}}\right)\int_{\Sigma} \varphi^2 |A|^2|\nabla u|^2 \mathrm{dvol}_{\Sigma}+\frac{1}{2}\int_{\Sigma} \varphi^2 |\nabla|\nabla u||^2 \mathrm{dvol}_{\Sigma}\leqslant \int_{\Sigma} |\nabla\varphi|^2 |\nabla u|^2 \mathrm{dvol}_{\Sigma},
\end{equation*}
for any $\varphi\in C^1(\Sigma)$.
\end{lemma}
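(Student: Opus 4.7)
The plan is to combine the stability inequality (\ref{stability}) for $\Sigma$ with a Bochner--Kato inequality for the harmonic function $u$, along the Schoen--Simon--Yau / Chodosh--Li lines (compare \cite{CL23}); the new ingredient compared to the interior case is the careful justification that every boundary integral on $\partial^F\Sigma$ vanishes, which is where the totally geodesic property established in Section~3 and the Neumann condition $\partial u/\partial \eta=0$ jointly enter.

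Set $v:=|\nabla u|$. On $\{v>0\}$, I would combine the Bochner formula $\tfrac{1}{2}\Delta v^{2}=|\nabla^{2}u|^{2}+\mathrm{Ric}_{\Sigma}(\nabla u,\nabla u)$ with the Gauss identity for a minimal hypersurface in Euclidean space (yielding $\mathrm{Ric}_\Sigma(\nabla u,\nabla u)\geqslant -|A|^{2}v^{2}$) and with the refined Kato inequality for harmonic functions in dimension three, $|\nabla^{2}u|^{2}\geqslant \tfrac{3}{2}|\nabla v|^{2}$, to obtain the pointwise bound
\[
v\,\Delta v\geqslant \tfrac{1}{2}|\nabla v|^{2}-|A|^{2}v^{2}.
\]
Multiplying by $\varphi^{2}$ and integrating by parts on $\Sigma$ produces one integral inequality; applying the stability inequality (\ref{stability}) to the test function $f=\varphi v$ produces a second. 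In both computations the boundary contribution on $\partial^F\Sigma$ vanishes: for the second variation this is the Remark after (\ref{second}); for the Bochner-based identity it follows because the Neumann condition $\partial u/\partial\eta=0$, combined with the fact that $\partial^F\Sigma$ is totally geodesic (Section~3), allows one to extend $u$ evenly by reflection across $\partial^F\Sigma$, which forces $\partial v/\partial\eta=0$. The resulting pair of inequalities involves the quantities $\int_\Sigma\varphi^{2}|A|^{2}v^{2}$, $\int_\Sigma\varphi^{2}|\nabla v|^{2}$, $\int_\Sigma v^{2}|\nabla\varphi|^{2}$, together with a cross term $\int_\Sigma\varphi v\,\nabla\varphi\cdot\nabla v$.

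I would then combine the two inequalities via Young's inequality applied to the cross term with a free parameter; optimizing this parameter (which ends up being $1/\sqrt{2}$, reflecting the slack $1/(n-1)=1/2$ in the refined Kato inequality for $n=3$) yields the precise coefficients $1-1/\sqrt{2}$ and $1/2$ in the claimed estimate. This step is analogous to \cite[Proposition~3.2]{CL21}. The zero set $\{v=0\}$ is dealt with by the standard regularization $v\mapsto\sqrt{v^{2}+\varepsilon^{2}}$ followed by $\varepsilon\downarrow 0$, and neighborhoods of the edge $\partial^E\Sigma$ are handled by multiplying $\varphi$ by a capacity cutoff; the $C^{1,1}$-to-edge regularity and the fact that $\partial^E\Sigma$ has codimension two in $\Sigma$ make this cutoff negligible in the limit.

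The main obstacle, in my view, is the vanishing of boundary integrals on $\partial^F\Sigma$ in both integrations by parts; this relies essentially on the totally geodesic property from Section~3, used once to kill the face term in the second variation and once to justify the even reflection of $v$ needed to kill the face term after IBP on the Bochner side. The optimization of Young's inequality producing the sharp coefficient $1-1/\sqrt{2}$ is a secondary but nontrivial computational point, tied to the dimension-specific refined Kato constant.
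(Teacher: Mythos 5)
Your proposal reconstructs in full the Schoen--Simon--Yau--type computation (Bochner, refined Kato, stability applied to $f=\varphi|\nabla u|$, optimized cross-term) that the paper simply imports by citing \cite[Lemma~21]{CL23}, and you correctly isolate the one new ingredient in the free-boundary setting, namely that the boundary term $\int_{\partial\Sigma}|\nabla u|\,\langle\nabla|\nabla u|,\eta\rangle\,\varphi^2$ arising from integration by parts vanishes because the Neumann condition and the totally geodesic faces let $u$ (hence $|\nabla u|$) reflect evenly across $\partial^F\Sigma$. This is precisely the paper's argument, only spelled out rather than deferred to \cite{CL23}.
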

\begin{proof}
    Detecting the proof of \cite[Lemma 21]{CL23}, we find that there is an extra boundary integral in (12) when integrating by parts; i.e, we need to estimate
    \begin{equation*}
        \int_{\partial \Sigma}|\nabla u|\langle \nabla|\nabla u|, \eta\rangle \varphi^2 \mathrm{dvol}_{\partial \Sigma}. 
    \end{equation*}
Since $\mathcal{H}^{n-1}(\partial^{E}\Sigma)=0$, it suffices to show that $\langle \nabla|\nabla u|, \eta\rangle(p)=0$ for all $p\in \partial^F \Sigma$. Note that the harmonic function $u$ satisfies the Neumann condition $\frac{\partial u}{\partial \eta}=0$ on $\partial^F \Sigma$. Let $\tilde{u}$ be the even reflection of $u$ across $\partial H_{+}$. Then $\tilde{u}$ is a harmonic function in a neighborhood of $p$. Now let $\gamma:(-1,1)\to \tilde{\Sigma}$ be a curve that
\begin{equation*}
    \gamma(0)=p,\quad \gamma^{'}(0)=\eta,\quad  \gamma(t)=R(\gamma(-t)),
\end{equation*}
where $R$ denotes reflection across $\partial H_{+}$. Then $|\nabla \tilde{u}|(\gamma(t))$ is an even function of $t$ and so
\begin{equation}
    0=\frac{d}{d t}\bigg|_{t=0}|\nabla \tilde{u}|(\gamma(t))= \langle \nabla|\nabla u|, \eta\rangle(p).
\end{equation}
Since $p$ is arbitrary, the boundary integral therefore vanishes. The remainder of the proof can now be completely exactly as in \cite{CL23}.
\end{proof}
Now we are ready to prove the one-endness.
\begin{proposition}\label{end}
Suppose that $\left(\Sigma^3, \left\{\partial_3 \Sigma\right\}\right)\subset \left(\Omega^4, \left\{\partial_4 \Omega\right\}\right)$ is a stable $C^{1,1}$-to-edge properly embedded FBMH. $\Sigma$ has only one end.    
\end{proposition}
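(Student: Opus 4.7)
The strategy is the standard argument by contradiction, adapted from \cite[Theorem 1]{CSZ97} and the estimates in \cite[Section 3]{CL23} to the free boundary setting. Suppose for contradiction that $\Sigma^3$ has at least two ends. Then Lemma \ref{harmonic} produces a non-constant bounded harmonic function $u$ on $\Sigma$ with finite Dirichlet energy $\int_{\Sigma}|\nabla u|^2 < \infty$ satisfying $\partial u/\partial \eta = 0$ on $\partial^F \Sigma$.

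The plan is to plug a log-cutoff into the stability-type estimate of Lemma \ref{estimate} and pass to the limit. Fix a base point (say on $\partial^E \Sigma$) and, for each large $r$, let $\varphi_r \in C^1(\Sigma)$ be the standard logarithmic cutoff that equals $1$ on $B_{\Sigma}(0,\sqrt{r})$, vanishes outside $B_{\Sigma}(0,r)$, and satisfies $|\nabla \varphi_r|^2 \leqslant C (\log r)^{-2} r^{-2}\cdot \chi_{B_{\Sigma}(0,r)\setminus B_{\Sigma}(0,\sqrt{r})}$. Substituting $\varphi_r$ into Lemma \ref{estimate} gives
\begin{equation*}
\left(1-\tfrac{1}{\sqrt{2}}\right)\int_{\Sigma}\varphi_r^2|A|^2|\nabla u|^2 + \tfrac{1}{2}\int_{\Sigma}\varphi_r^2|\nabla|\nabla u||^2 \leqslant \int_{\Sigma}|\nabla \varphi_r|^2|\nabla u|^2.
\end{equation*}
The right-hand side is bounded by $C(\log r)^{-1}\cdot \int_{\Sigma}|\nabla u|^2$ by the standard log-cutoff computation, so it tends to $0$ as $r\to \infty$. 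Letting $r\to \infty$ and applying Fatou (or monotone convergence, since $\varphi_r\uparrow 1$) forces
\begin{equation*}
|A|^2|\nabla u|^2 \equiv 0 \quad \text{and} \quad |\nabla |\nabla u||\equiv 0 \quad \text{on } \Sigma.
\end{equation*}

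The second identity says $|\nabla u|$ is constant on $\Sigma$. Since every end of $\Sigma$ has infinite volume by Lemma \ref{volume}, and $\int_{\Sigma}|\nabla u|^2 < \infty$, this constant must be $0$, i.e., $u$ is constant on $\Sigma$. This contradicts the non-constancy of $u$ from Lemma \ref{harmonic}, and so $\Sigma$ has exactly one end.

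The only subtle point, already packaged into Lemma \ref{estimate}, is that the boundary integral appearing when one integrates by parts on $\Sigma$ (as in \cite[Lemma 21]{CL23}) vanishes: this uses the Neumann condition $\partial u/\partial \eta = 0$ together with the totally geodesic property of $\partial^F \Sigma$ established in Section 3, and the negligibility of the edge $\partial^E \Sigma$ in codimension one. Once Lemma \ref{estimate} is in hand, the log-cutoff computation is essentially identical to the closed case, so no additional obstacle arises.
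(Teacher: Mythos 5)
Your proof follows the paper's argument exactly in structure: use Lemma \ref{harmonic} to obtain a nonconstant finite-energy harmonic function $u$, plug a cutoff into Lemma \ref{estimate} to force $|\nabla u|$ to be constant, then contradict either the nonconstancy of $u$ or the infinite volume from Lemma \ref{volume}. The only (harmless) difference is that you invoke the log-cutoff, whereas the paper uses the simpler linear cutoff $\varphi\equiv 1$ on $B_{\Sigma}(p,\rho)$, $\varphi\equiv 0$ outside $B_{\Sigma}(p,2\rho)$, $|\nabla\varphi|\leqslant 2/\rho$, which already suffices since $\int_{\Sigma}|\nabla\varphi|^2|\nabla u|^2\leqslant 4C\rho^{-2}\to 0$ follows from the finite Dirichlet energy alone.
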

\begin{proof}
    Suppose the contrary, that $\Sigma$ has at least two ends. Then Lemma \ref{harmonic} implies that $\Sigma$ admits a nontrivial harmonic function $u$ with finite energy. Take an arbitrary point $p\in \mathring{\Sigma}$. For $\rho >0$, take $\varphi \in C^1(\Sigma)$ such that 
\begin{equation}
    \varphi=\left\{
\begin{array}{cl}
&1, ~~\mbox{in}~~ B_{\Sigma}(p, \rho)\\
&0, ~~\mbox{in}~~ B^c_{\Sigma}(p,2\rho),\\
\end{array}
\right.
\end{equation}
 and $|\nabla \varphi|\leqslant \frac{2}{\rho}$. Note that the constructed $\varphi$ are not needed to be compact supported in $\mathring{\Sigma}$. Then Lemma \ref{estimate} implies that  
 \begin{equation*}
    \left(1-\frac{1}{\sqrt{2}}\right)\int_{B_{\Sigma}(p, \rho)} \varphi^2 |A|^2|\nabla u|^2 \mathrm{dvol}_{\Sigma}+\frac{1}{2}\int_{B_{\Sigma}(p, \rho)} \varphi^2 |\nabla|\nabla u||^2 \mathrm{dvol}_{\Sigma}\leqslant \frac{4C}{\rho^2}.
\end{equation*}
 Sending $\rho\to \infty$, we conclude that $|\nabla|\nabla u||^2\equiv 0$, which implies that $|\nabla u|$ is a constant. Since $u$ is nonconstant, we have that $|\nabla u|>0$. However, this implies that
 \begin{equation*}
     \mathrm{Vol}(\Sigma)=\frac{1}{|\nabla u|^2}\int_{\Sigma}|\nabla u|^2<\infty,
 \end{equation*}
 a contradiction.
\end{proof}

\section{A conformal deformation of metrics}
Suppose that $\left(\Sigma^n, \left\{\partial_n \Sigma\right\}\right)\subset \left(\Omega^{n+1}, \left\{\partial_{n+1} \Omega\right\}\right)$ is a stable $C^{1,1}$-to-edge properly embedded FBMH. Without loss of generality, we may assume that $0\in \partial^{E}\Sigma^n$. In this section we carry out the conformal deformation technique used by Chodosh--Li \cite{CL23}. 

Consider the function $r(x)=\mathrm{dist}_{\R^{n+1}}(0,x)$ on $\Sigma$. As shown in \cite[Section 5]{CL23}, we find that
\begin{equation*}
    \Delta r=\frac{n}{r}-\frac{|\nabla r|^2}{r}.
\end{equation*}

Suppose that $w>0$ is a smooth function on $\Sigma\backslash \{0\}$. On $\Sigma\backslash \{0\}$, define $\tilde{g}=w^2g$. For $\lambda\in \R$, we consider the quadratic form
\begin{equation*}
    \mathcal{Q}_{w}(\varphi)=\int_{\Sigma}\left(|\tilde{\nabla}\varphi|^2+\left(\frac{1}{2}\tilde{R}-\lambda\right)\varphi^2\right)dV_{\tilde{g}},
\end{equation*}
where $\varphi \in C^1_c (\Sigma\backslash \{0\})$ satisfies
\begin{equation}\label{Neu}
    \frac{\partial \varphi}{\partial \tilde{\eta}}=0, \quad ~~\mbox{on}~~ \partial \left(\Sigma\backslash \{0\}\right),
\end{equation}
and
$\tilde{\nabla}$, $\tilde{R}$, $dV_{\tilde{g}}$, $\tilde{\eta}$ are the gradient, the scalar curvature, the volume form and the outward normal derivative with respect to $\tilde{g}$, respectively. In the sequel, we choose $w=r^{-1}$ on $\Sigma\backslash \{0\}$, and the dimension of $\Sigma^n$ is equal to $3$.  

One relates the geometric quantities in $g$ and $\tilde{g}$ as follows:
\begin{equation}\label{conf}
    |\nabla\varphi|_g^2=w^2|\tilde{\nabla}\varphi|_{\tilde{g}}^2, \quad dV_{\tilde{g}}=w^{-n}dV_{g}, \quad \tilde{\eta}=w^{-1}\eta.
\end{equation}
Moreover, we have
\begin{equation*}
    w^2\tilde{R}=R-2(n-1)\Delta \log w-(n-1)(n-2)|\nabla \log w|^2.
\end{equation*}
For $\varphi \in C^1_c (\Sigma\backslash \{0\})$ satisfying (\ref{Neu}), by (\ref{conf}), we know that
\begin{equation}
     \frac{\partial \varphi}{\partial \eta}=0.
\end{equation}
Note that the radial function $w$ can be extended to be an even function on the reflection of $\Sigma$. Combining it with the argument used in Section 3, we have 
\begin{equation}\label{zero}
    \frac{\partial (w^{\frac{2-n}{2}}\varphi)}{\partial \tilde{\eta}}=w^{\frac{2-n}{2}}\frac{\partial \varphi}{\partial \tilde{\eta}}+\varphi\frac{\partial w^{\frac{2-n}{2}}}{\partial \tilde{\eta}}\\
    =\varphi\frac{\partial w^{\frac{2-n}{2}}}{\partial \tilde{\eta}}=\varphi w^{-1}\frac{\partial w^{\frac{2-n}{2}}}{\partial {\eta}}=0.
\end{equation}

Denote by $\tilde{\mathcal{Q}}_w(\varphi):=\mathcal{Q}_{w}(w^{\frac{2-n}{2}}\varphi)$. Following the same computation in \cite[Section 5]{CL23}, we have
\begin{equation*}
    \begin{split}
    &\tilde{\mathcal{Q}}_w(\varphi)\\
    =&\int_{\Sigma}\left(w^{-2}|{\nabla}(w^{\frac{2-n}{2}}\varphi)|_g^2+\left(\frac{1}{2}\tilde{R}-\lambda\right)w^{2-n}\varphi^2\right)w^n dV_{{g}}\\
    =&\int_{\Sigma}\left(|{\nabla}\varphi-\frac{n-2}{2}\varphi \nabla \log w|_g^2+\left(\frac{1}{2}\tilde{R}-\lambda\right)w^{2}\varphi^2\right) dV_{{g}}\\
    =&\int_{\Sigma}\left(|{\nabla}\varphi|_g^2-\frac{n-2}{2}\langle {\nabla}\varphi^2, \nabla \log w\rangle_g+\left(\frac{n-2}{2}\right)^2 |\nabla \log w|^2_g\varphi^2+\left(\frac{1}{2}\tilde{R}-\lambda\right)w^{2}\varphi^2\right) dV_{{g}}\\
    =&\int_{\Sigma}\left(|{\nabla}\varphi|_g^2+\left(\frac{n-2}{2}\Delta \log w+\left(\frac{n-2}{2}\right)^2 |\nabla \log w|^2_g+\left(\frac{1}{2}\tilde{R}-\lambda\right)w^{2}\right)\varphi^2\right) dV_{{g}}\\
    &-\int_{\partial \Sigma}\frac{n-2}{2}\varphi^2 \frac{\partial \log w}{\partial \eta}dA_g\\
    =&\int_{\Sigma}\left(|{\nabla}\varphi|_g^2+\frac{1}{2}{R}\varphi^2-\left(\frac{n}{2}\left(\Delta \log w+\frac{n-2}{2} |\nabla \log w|^2_g\right)+\lambda w^{2}\right)\varphi^2\right) dV_{{g}}
    \end{split}
\end{equation*}
where the boundary term vanishes due to (\ref{zero}). Note that
\begin{equation*}
    \Delta \log w+\frac{n-2}{2} |\nabla \log w|^2_g=-\frac{n}{r^2}+\frac{n+2}{2}\frac{|\nabla r|^2}{r^2}.
\end{equation*}
Therefore,
\begin{equation}
\begin{split}
\tilde{\mathcal{Q}}_w(\varphi)&=\int_{\Sigma}\left(|{\nabla}\varphi|_g^2+\frac{1}{2}{R}\varphi^2+\left(\frac{n}{2}\left(n-\frac{n+2}{2} |\nabla r|^2_g\right)-\lambda \right)r^{-2}\varphi^2\right) dV_{{g}}\\
  &\geqslant \int_{\Sigma}\left(|{\nabla}\varphi|_g^2+\frac{1}{2}{R}\varphi^2+\left(\frac{n(n-2)}{4}-\lambda \right)r^{-2}\varphi^2\right) dV_{{g}}.
    \end{split}
\end{equation}
By the Gauss equation, we have $|A|_g^2+R_g=0$. On the other hand, by \cite[Theorem 5.1]{MW23}, we know that for any $\varphi \in C^1_c (\Sigma\backslash \{0\})$,
\begin{equation*}
    \int_{\Sigma}\left(|{\nabla}\varphi|_g^2+\frac{1}{2}{R}\varphi^2\right)dV_{g}\geqslant \int_{\Sigma}\left(|{\nabla}\varphi|_g^2-|A|^2_g\varphi^2\right)dV_{g}\geqslant 0.
\end{equation*}
We summarize these in the following Proposition. 
\begin{proposition}
    Suppose that $\left(\Sigma^n, \left\{\partial_n \Sigma\right\}\right)\subset \left(\Omega^{n+1}, \left\{\partial_n \Omega\right\}\right)$ is a stable $C^{1,1}$-to-edge properly embedded FBMH. Then the conformally deformed manifold $(\Sigma\backslash \{0\}, \tilde{g}=r^{-2}g)$ satisfies 
    \begin{equation}
        \lambda^N_1\left(-\tilde{\Delta}+\frac{1}{2}\tilde{R}\right)\geqslant \lambda,
    \end{equation}
where $\lambda^N_1$ is the first Neumann eigenvalue and $\lambda=\frac{n(n-2)}{4}$.
\end{proposition}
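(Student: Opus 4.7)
The plan is to verify the Neumann-eigenvalue bound by establishing $\mathcal{Q}_w(\psi)\geq 0$ for every $\psi\in C^1_c(\Sigma\setminus\{0\})$ obeying the $\tilde g$-Neumann condition on $\partial^F\Sigma$; by the variational characterization of $\lambda_1^N$ this is equivalent to the stated bound $\lambda_1^N(-\tilde\Delta+\tfrac12\tilde R)\geq \lambda$. The key move is to convert $\mathcal{Q}_w$ to a quadratic form on the original manifold $(\Sigma,g)$, at which point the geometry of the underlying minimal hypersurface becomes available.

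First I would perform the conformal substitution $\psi=w^{(2-n)/2}\varphi$ with $w=r^{-1}$. Because of the relation $\tilde\eta=w^{-1}\eta$, the Neumann condition transfers between $\psi$ and $\varphi$, so one may test over $\varphi\in C^1_c(\Sigma\setminus\{0\})$ with $\partial_\eta\varphi=0$ on $\partial^F\Sigma$. After applying the conformal scalar-curvature formula and integrating by parts, a boundary term $\int_{\partial\Sigma}\tfrac{n-2}{2}\varphi^2\,\partial_\eta\log w\,\mathrm{dvol}_{\partial\Sigma}$ appears. I would show this vanishes by the same reflection trick used in Section~3: $w=r^{-1}$ depends only on the Euclidean radius from $0$, which is even under the local reflection across $\partial^F\Sigma$, so $\partial_\eta w\equiv 0$ on the face. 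Next, the pointwise inequality $|\nabla r|_g^2\leq 1$, which holds since $r$ restricts from the $1$-Lipschitz ambient Euclidean distance, bounds the coefficient of $r^{-2}\varphi^2$ produced by the scalar-curvature transformation and gives
\begin{equation*}
\tilde{\mathcal{Q}}_w(\varphi) \;\geq\; \int_\Sigma\!\left(|\nabla\varphi|_g^2+\tfrac12 R_g\,\varphi^2 + \Big(\tfrac{n(n-2)}{4}-\lambda\Big)r^{-2}\varphi^2\right)dV_g.
\end{equation*}

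Finally I would dispose of the first two terms using the Gauss equation and stability. Since $\Sigma$ is a minimal hypersurface in $\R^{n+1}$, the Gauss equation gives $R_g=-|A|_g^2$, hence $\tfrac12 R_g\geq -|A|_g^2$; stability in the wedge-adapted form \cite[Theorem~5.1]{MW23} (whose boundary integrals vanish thanks to the totally-geodesic property of the faces established in Section~3) then yields $\int_\Sigma(|\nabla\varphi|_g^2+\tfrac12 R_g\varphi^2)\,dV_g\geq 0$. Taking $\lambda=\tfrac{n(n-2)}{4}$ annihilates the remaining $r^{-2}$ term and proves $\tilde{\mathcal{Q}}_w(\varphi)\geq 0$. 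The main obstacle is the very first step: the conformal change produces a Neumann boundary contribution that is not visible in the closed case treated in \cite{CL23}, and its cancellation depends on the evenness of $r$ under local reflection plus the totally-geodesic property of $\partial^F\Sigma$. Once that cancellation is secured, the remaining computation is a direct adaptation of the scalar-curvature bookkeeping in \cite[Section~5]{CL23}.
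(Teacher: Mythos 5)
Your proposal is correct and follows essentially the same route as the paper: conformal substitution $\psi=w^{(2-n)/2}\varphi$, checking the Neumann condition is preserved via $\tilde\eta=w^{-1}\eta$, killing the resulting boundary term from integration by parts by observing that $w=r^{-1}$ is even under local reflection across $\partial^F\Sigma$ (so $\partial_\eta w=0$ there), using $|\nabla r|_g^2\leq 1$ to lower-bound the $r^{-2}$ coefficient by $\tfrac{n(n-2)}{4}-\lambda$, and finally invoking the Gauss equation $R_g=-|A|_g^2$ together with the stability inequality of \cite[Theorem 5.1]{MW23} to conclude $\tilde{\mathcal{Q}}_w(\varphi)\geq 0$. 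You also correctly identify the genuinely new point beyond \cite{CL23} — the boundary term and its cancellation — which is exactly what the paper isolates in its equation for $\partial(w^{(2-n)/2}\varphi)/\partial\tilde\eta$.
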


\section{Volume Estimate}
We first prove a diameter bound for free boundary warped $\mu-$bubbles in $3-$manifolds with boundary satisfying $\lambda^N_1\left(-\tilde{\Delta}+\frac{1}{2}\tilde{R}\right)\geqslant \lambda>0$. 

Recall that $r(x)=\mathrm{dist}_{\R^{4}}(0,x)$ and $\bar{r}(x)=\mathrm{dist}_{(\Sigma,g)}(x,0)$, and we consider $\tilde{g}=r^{-2}g$. Combining the reflection technique used above and the fact that the conformal factor is symmetric with respect to the reflection, we conclude that $\partial^{F}_{+}\Sigma$ and $\partial^{F}_{-}\Sigma$ are totally geodesic with respect to $\tilde{g}$ as well. 

Fix $\rho>0$, and consider the ball $B_{\R^4}(0,e^{\frac{5\pi}{\sqrt{\lambda}}}\rho)$. By Proposition \ref{end}, $\Sigma\backslash B_{\R^4}(0,e^{\frac{5\pi}{\sqrt{\lambda}}}\rho)$ has only one unbounded connected component $E$. Denote by $\Sigma^{'}=\Sigma\backslash E$. We claim that $\partial \Sigma^{'}\cap \mathring{\Sigma}=\partial E\cap \mathring{\Sigma}$ is connected. Indeed, since $\Sigma^{'}$ and $E$ are both connected, if $\partial \Sigma^{'}\cap \mathring{\Sigma}$ has more than one connected components, one can find a loop in $\mathring{\Sigma}$ intersecting one component of $\partial \Sigma^{'}\cap \mathring{\Sigma}$ exactly once, contradicting that $\Sigma$ is simply connected.
For convenience, we denote $\partial \Sigma^{'}\cap \mathring{\Sigma}$, $\partial \Sigma^{'}\cap \partial H^4_{+}$ and $\partial \Sigma^{'}\cap \partial H^4_{-}$ by $\partial \mathring{\Sigma}^{'}$, $\partial^F_{+} {\Sigma}^{'}$ and $\partial^F_{-} {\Sigma}^{'}$, respectively.
\begin{lemma}\label{bubble}
    Let $(\Sigma^{'}, \tilde{g})$ be constructed as above satisfying
    \begin{equation}\label{spectrum}
        \lambda^N_1\left(-\tilde{\Delta}+\frac{1}{2}\tilde{R}\right)\geqslant \lambda>0.
    \end{equation}
    Then there exists a connected proper embedded open set $\Pi$ containing $\partial \mathring{\Sigma}^{'}$, $\Pi\subset B_{\Sigma}(\partial \mathring{\Sigma}^{'}, \frac{5\pi}{\sqrt{\lambda}})$, such that each connected component of $\partial \Pi\backslash \partial \mathring{\Sigma}^{'}$ is a $2$-sphere with area at most $\frac{8\pi}{\lambda}$ and intrinsic diameter at most $\frac{2\pi}{\sqrt{\lambda}}$.
\end{lemma}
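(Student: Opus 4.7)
The plan is to construct $\Pi$ as the region bounded by a weighted free boundary $\mu$-bubble, adapting the construction of Chodosh--Li \cite{CL23} to the wedge setting. Let $u > 0$ be the first Neumann eigenfunction of $-\tilde{\Delta}+\tfrac{1}{2}\tilde{R}$ on a large compact domain containing $B_{\Sigma}(\partial\mathring{\Sigma}', \tfrac{5\pi}{\sqrt{\lambda}})$, regularized so as to have $C^{1,\alpha}$ regularity near the edge. Let $t$ be a smoothing of $d_{\tilde{g}}(\cdot, \partial\mathring{\Sigma}')$, and pick a warping profile $h: (0, \tfrac{5\pi}{\sqrt{\lambda}}) \to \R$ with $h(t) \to +\infty$ as $t \to 0^+$ and $h(t) \to -\infty$ as $t \to (\tfrac{5\pi}{\sqrt{\lambda}})^-$, satisfying a Gromov--Lawson-type inequality $2h' + h^2 + \lambda < 0$. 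I then minimize
\[
\mathcal{A}^h_u(\Omega) \;=\; \int_{\partial^* \Omega \cap \mathring{\Sigma}'} u\, d\mathcal{H}^2 \;-\; \int_{\Sigma'} (\chi_\Omega - \chi_{\Omega_0})\, h\, u\, dV_{\tilde{g}}
\]
over Caccioppoli sets with $\Omega \triangle \Omega_0$ compactly contained in $\{0 < t < \tfrac{5\pi}{\sqrt{\lambda}}\}$, where $\Omega_0$ is a small thickening of $\partial\mathring{\Sigma}'$. The barriers $h \to \pm\infty$ confine minimizing sequences to this slab, so that (by standard $\mu$-bubble regularity valid in dimension $3$) a smooth minimizer $\Xi$ exists with $\Xi^* := \partial \Pi \setminus \partial\mathring{\Sigma}'$ meeting $\partial^F\Sigma'$ orthogonally and contained in $B_{\Sigma}(\partial\mathring{\Sigma}', \tfrac{5\pi}{\sqrt{\lambda}})$.

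The delicate point, as flagged in the introduction, is showing $\Xi^*$ is properly embedded. I rule out interior-face touching of $\mathring{\Xi}^*$ with $\partial^F\Sigma'$ by reflecting $\Xi^*$ across $\partial^F\Sigma'$ --- legitimate because $\partial^F\Sigma'$ is totally geodesic (Section 3) and the Neumann condition on $u$ is preserved under this reflection --- so that both $\Xi^*$ and its reflection satisfy the same prescribed-mean-curvature equation $H_{\Xi^*} = h - \tilde{\nabla}_\nu \log u$, and then applying the strong maximum principle. Face-edge touching of $\partial^F\Xi^*$ with $\partial^E\Sigma'$ is excluded by constructing, in a neighborhood of any hypothetical contact point, a compactly supported admissible vector field along which the first variation of $\mathcal{A}^h_u$ is strictly negative, contradicting minimality. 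This second exclusion is the \emph{main obstacle} of the proof, having no analogue in \cite{CL23}; the required variation must exploit the wedge angle being at most $\pi$ together with the $C^{1,\alpha}$ regularity of $u$ up to $\partial^E\Sigma'$.

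Finally I extract the geometric bounds on each connected component $\Xi^*_0$. Stability of $\Xi$ as a $\mu$-bubble, after the standard substitution $\varphi \mapsto u^{-1/2}\varphi$ that absorbs the conformal weight and replaces $\tilde{R}$ using the eigenvalue equation $-\tilde{\Delta} u + \tfrac{1}{2}\tilde{R}\, u = \lambda u$, combined with the Gauss equation on $\Xi^*_0$ and the differential inequality satisfied by $h$, yields a second-variation inequality from which --- all boundary integrals vanishing by the totally geodesic/orthogonality/Neumann structure --- Gauss--Bonnet extracts $4\pi\chi(\Xi^*_0) \geq \tfrac{\lambda}{2}|\Xi^*_0|$. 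This forces $\chi(\Xi^*_0) > 0$, so $\Xi^*_0 \cong S^2$ with area at most $8\pi/\lambda$; the intrinsic diameter bound $\diam(\Xi^*_0) \leq 2\pi/\sqrt{\lambda}$ then follows from a Bonnet--Myers-type argument applied to $\Xi^*_0$ using the effective Ricci lower bound inherited from the same stability inequality.
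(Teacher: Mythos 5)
Your proposal tracks the paper's overall architecture faithfully: you construct a weighted free boundary $\mu$-bubble in the spirit of Chodosh--Li, correctly isolate the two touching phenomena that arise from the wedge (interior of the bubble with a face; bubble boundary with the edge), handle the first by reflecting across the totally geodesic face, and close with the substitution $\psi = u^{-1/2}$ plus Gauss--Bonnet. However, the differential inequality you impose on the profile $h$ has the \emph{wrong sign}. What is needed is $\lambda + h^2 - 2|\tilde{\nabla} h| \geqslant 0$ (equivalently, for $h$ monotone decreasing in the distance parameter, $2h' + h^2 + \lambda \geqslant 0$), whereas you wrote $2h'+h^2+\lambda<0$. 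The point is that after applying the spectral inequality for $u$, the $\mu$-bubble second variation produces the term $-\tfrac12\big(\lambda + h^2 + 2\langle \tilde{\nabla} h, \nu_{\partial\Pi}\rangle\big)\psi^2 u$; since $\langle \tilde{\nabla} h, \nu_{\partial\Pi}\rangle \geqslant -|\tilde{\nabla} h|$, the correct sign makes this term nonpositive so it can be discarded and Gauss--Bonnet gives $\lambda|\Xi|\leqslant 8\pi$. With your sign the term can be positive and the conclusion does not follow. The $\pm\infty$ barrier behavior is perfectly compatible with the correct direction (e.g.\ $h(t)\propto\cot(\tfrac{\sqrt\lambda}{2}t)$), so there is no tension forcing your sign.

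Two smaller remarks. First, you flag the face--edge exclusion as the main obstacle but only assert the existence of the bad variation; the paper's construction is short and explicit: at a contact point $p\in\partial^F\Xi\cap\partial^E\Sigma'$ where $\Xi$ meets $\partial^F_+\Sigma'$ orthogonally, take $X\in T_p\partial^F_-\Sigma'$ orthogonal to $T_p\partial^F\Xi$, note $\langle\varsigma,X\rangle<0$ for the outward co-normal $\varsigma$, extend $X$ to an admissible variation preserving that sign, and compute $\delta\mathcal{A}_\Pi(X)=\int_{\partial^F\Xi}\langle\varsigma,X\rangle<0$. Also, the lemma is invoked in the proof of Theorem \ref{main} \emph{after} reflecting $\Sigma$ to a wedge with angle $\geqslant\pi$, so a mechanism that relies on the wedge angle being at most $\pi$ would be insufficient as stated. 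Second, the intrinsic diameter bound is not a Bonnet--Myers estimate (stability yields a spectral, not pointwise, Gauss curvature condition on $\Xi$); it is a separate test-function argument from \cite[Lemma 17]{CL23}.
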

\begin{proof}
    This is an application of estimates for the free boundary warped $\mu-$bubbles (see, e.g. \cite[Section 4]{CL23}). Since $\Sigma^{'}$ satisfies (\ref{spectrum}), there exists $u\in C^{\infty}(\mathring{\Sigma}^{'})$ satisfying (\ref{Neu}), $u>0$ in $\mathring{\Sigma}^{'}$ and $C^{1,\alpha}$-to-edge, such that
    \begin{equation}\label{scalar}
        \tilde{\Delta}_{\Sigma^{'}} u\leqslant -\frac{1}{2}\left(2\lambda-\tilde{R}_{\Sigma^{'}}\right)u.
    \end{equation}
Take $\varphi_0\in C^{\infty}(\Sigma^{'})$ to be a smoothing of $\mathrm{dist}_{\Sigma^{'}}(\cdot, \partial \mathring{\Sigma}^{'})$ such that $|\mathrm{Lip}(\varphi_0)|\leqslant 2$, and $\varphi_0=0$ on $\partial \mathring{\Sigma}^{'}$. Choose $\epsilon\in (0,\frac{1}{2})$ such that $\epsilon$, $\frac{4\pi}{\sqrt{\lambda}}+2\epsilon$ are regular values of $\varphi_0$. Define
\begin{equation*}
    \varphi=\frac{\varphi_0-\epsilon}{\frac{4}{\sqrt{\lambda}}+\frac{\epsilon}{\pi}}-\frac{\pi}{2},
\end{equation*}
$\Pi_1=\{x\in \Sigma^{'}:-\frac{\pi}{2}<\varphi<\frac{\pi}{2}\}$, and $\Pi_0=\{x\in \Sigma^{'}:-\frac{\pi}{2}<\varphi \leqslant 0\}$. We have that $|\mathrm{Lip}(\varphi)|\leqslant \frac{\sqrt{\lambda}}{2}$. In $\Pi_1$, define $h(x)=-\frac{1}{2}\tan (\varphi(x))$. By a direct computation, we have
\begin{equation}\label{h}
    \lambda+h^2-2|\tilde{\nabla} h|\geqslant 0.
\end{equation}
Note that $\partial \mathring{\Sigma}^{'}\cap \partial^F_{\pm} {\Sigma}^{'}$ consisting of smooth $1$-dimensional closed submanifolds. Moreover, we may assume that $\partial^F_{\pm} {\Sigma}^{'}$ meets $\partial \mathring{\Sigma}^{'}$ orthogonally.

Consider the functional
\begin{equation*}
    \mathcal{A}(\Pi)=\int_{\partial \Pi} ud\mathcal{H}^2-\int_{\Pi_1}\left(\chi_{\Pi}-\chi_{\Pi_0}\right)hu d\mathcal{H}^3
\end{equation*}
among Caccioppoli sets $\Pi$ in $\Pi_1$ with $\Pi\Delta \Pi_0$ compactly contained in $\Pi_1$. By \cite[Proposition 15]{CL23}, there exists $\overline{\Pi}$ with $\partial \overline{\Pi}\subset \Pi_1\cup \partial \mathring{\Sigma}^{'}$ minimizing $\mathcal{A}$ among such regions. $\partial \overline{\Pi}\cap \mathring{\Sigma}^{'}$ is smooth and along it,
\begin{equation}\label{H}
    H=-u^{-1}\langle \tilde{\nabla}_{\Sigma^{'}}u, \nu_{\partial \Pi}\rangle+h.
\end{equation}
Moreover, $\partial \overline{\Pi}$ meets $\partial^F_{\pm} {\Sigma}^{'}$ orthogonally and it may have nonempty intersection with $\partial^E \Sigma^{'}$.

We take $\Pi$ to be the connected component of $\{x\in \Sigma^{'}:0\leqslant\varphi_0 \leqslant \epsilon\}\cup \overline{\Pi}$ that contains $\partial \mathring{\Sigma}^{'}$.  We claim that each connected component $\Xi$ of $\partial \Pi\cap \Pi_1$ is properly embedded in $\Sigma^{'}$. Without loss of generality, we may assume that $\partial \Xi$ is not contained in $\mathring{\Sigma}^{'}$. First, by (\ref{Neu}) and the fact that $\partial^F_{\pm} {\Sigma}^{'}$ are totally geodesic, we know that $\mathring{\Xi}$ can't touch $\partial^F_{\pm} {\Sigma}^{'}$. Besides, at any $p\in \partial^F \Xi \cap \partial^E \Sigma^{'}$, without loss of generality, we may assume that the unit outward co-normal $\varsigma$ to $\partial^F \Xi$ is orthogonal to $\partial^F_{+} {\Sigma}^{'}$. Denote $X\in T_p \partial^F_{-} {\Sigma}^{'}$ the unit vector orthogonal to $T_p \partial^F \Xi$. Note that by our assumption, the inner product $\langle \varsigma, X\rangle$ at the point $p$ is negative. We can extend $X$ to be a variation around $p$ which preserves the negativity of $\langle \varsigma, X\rangle$. Combining this with (\ref{H}) and the first variation formula, we have
\begin{equation*}
    \delta\mathcal{A}_{\Pi}(X)=\int_{\partial^F \Xi}\langle \varsigma, X\rangle<0,
\end{equation*}
which is contradicted with the minimizing property of $\Pi$.

Due to the $C^{1,\alpha}$-to-edge regularity, we shall make use of the log-cutoff trick. Specifically, we define cutoff functions $\eta_r$ on $\Pi$ which vanish in a neighborhood of $\partial^E \Pi$ (possibly empty) and which convergence pointwisely to $1$ on $\Pi\backslash \partial^E\Pi$ as $r\to 0$. Since $\partial^E \Pi$ is of codimension 2 in $\Pi$, we can arrange that 
\begin{equation*}
    \int_{\Pi} |\nabla \eta_r|^2 d\mathcal{H}^2\to 0, ~~\mbox{as}~~ r\to 0.
\end{equation*}

Now, we verify that $\Pi$ satisfies the conclusions of this Lemma. Given a test function $\psi$ defined on $\Pi$, composing with the cutoff function $\eta_r$ if necessary. For convenience, we still denote it by $\psi$. Indeed, for any connected component $\Xi$ of $\partial \Pi\cap \Pi_1$, the stability of $\mathcal{A}$ \cite[Proposition]{CL23} implies
\begin{equation*}
    \begin{split}
        0\leqslant&\int_{\Xi}\left(|\nabla \psi|^2u-\frac{1}{2}\left(\tilde{R}_{\Sigma^{'}}-\lambda-\tilde{R}_{\Xi}+|\mathring{\tilde{A}}_{\Xi}|^2\right)\psi^2 u+\left(\tilde{\Delta}_{\Sigma^{'}}u-\tilde{\Delta}_{\Xi}u\right)\psi^2\right)d\mathcal{H}^2\\
        &\int_{\Xi}\left(-\frac{1}{2}u^{-1}\langle \tilde{\nabla}_{\Sigma^{'}}u, \nu_{\partial \Pi}\rangle^2 \psi^2-\frac{1}{2}\left(\lambda+h^2+2\langle \tilde{\nabla}_{\Sigma^{'}} h, \nu_{\partial \Pi}\right)\psi^2 u\right)d\mathcal{H}^2\\
        &-\int_{\partial \Xi} \tilde{A}_{\partial^F_{\pm} {\Sigma}^{'}}(\nu_{\partial \Pi}, \nu_{\partial \Pi}) \psi^2 u d\mathcal{H}^1.
    \end{split}
\end{equation*}
Using (\ref{scalar}) and (\ref{h}), we have
\begin{equation*}
    \begin{split}
        0\leqslant&\int_{\Xi}\left(|\nabla \psi|^2u+\frac{1}{2}\left(\tilde{R}_{\Xi}-\lambda\right)\psi^2 u-\tilde{\Delta}_{\Xi}u\psi^2\right)d\mathcal{H}^2-\int_{\partial \Xi} \tilde{A}_{\partial^F_{\pm} {\Sigma}^{'}}(\nu_{\partial \Pi}, \nu_{\partial \Pi}) \psi^2 u d\mathcal{H}^1.
    \end{split}
\end{equation*}
Taking $\psi=u^{-\frac{1}{2}}$ and integrating by parts, we have
\begin{equation}
\begin{split}
    0&\leqslant\int_{\Xi}\left(\frac{1}{4}|\nabla u|^2u^{-2}+\tilde{K}_{\Xi}-\frac{1}{2}\lambda-u^{-1}\tilde{\Delta}_{\Xi}u\right)d\mathcal{H}^2-\int_{\partial \Xi} \tilde{A}_{\partial^F_{\pm} {\Sigma}^{'}}(\nu_{\partial \Pi}, \nu_{\partial \Pi}) \psi^2 u d\mathcal{H}^1\\
    &=\int_{\Xi}\left(-\frac{3}{4}|\nabla u|^2u^{-2}+\tilde{K}_{\Xi}-\frac{1}{2}\lambda\right)d\mathcal{H}^2-\int_{\partial \Xi}\left( \tilde{A}_{\partial^F_{\pm} {\Sigma}^{'}}(\nu_{\partial \Pi}, \nu_{\partial \Pi})+u^{-1}\frac{\partial u}{\partial \tilde{\eta}}\right) d\mathcal{H}^1.
    \end{split}
\end{equation}
Recall that $\partial^{F}_{+}\Sigma$ and $\partial^{F}_{-}\Sigma$ are totally geodesic with respect to $\tilde{g}$ and $u$ satisfies the Neumann condition (\ref{Neu}). Boundary integrals in above vanish, and we conclude that
\begin{equation*}
    \lambda |\Xi|\leqslant 2\int_{\Xi }\tilde{K}_{\Xi} d\mathcal{H}^2 \leqslant 8\pi \Rightarrow |\Xi|\leqslant \frac{8\pi}{\lambda}.
\end{equation*}
Note that we have used Gauss--Bonnet formula, which also implies that $\Xi$ is a $2$-sphere. The diameter upper bound follows from \cite[Lemma 17]{CL23}.
\end{proof}

\begin{proof}[Proof of Theorem \ref{main}]
First of all, we consider $\lceil \frac{\pi}{\theta}\rceil$ times reflections of $\Sigma$ with respect to $\partial H^4_{+}$ or $\partial H^4_{-}$. The union of these manifolds is still a stable $C^{1,1}$-to-edge properly embedded free boundary minimal hypersurface in a wedge domain with angle $\lceil \frac{\pi}{\theta}\rceil \cdot \theta$. By the assumption on $\theta$, we know that $\pi<\lceil \frac{\pi}{\theta}\rceil \cdot \theta<2\pi$. For convenience, we denote this union still by $\Sigma$.
   
   Applying Lemma \ref{bubble} to $(\Sigma^{'}\backslash \{0\}, \tilde{g})$, we find a connected open set $\Pi$ in the $\frac{5\pi}{\sqrt{\lambda}}$ neighborhood of $\partial \mathring{\Sigma}^{'}$, such that each connected component of $\partial \Pi\backslash \partial \mathring{\Sigma}^{'}$ is a $2-$sphere with area at most $\frac{8\pi}{\lambda}$ and intrinsic diameter at most $\frac{2\pi}{\sqrt{\lambda}}$. Let $\Sigma_0$ be the connected component of $\Sigma^{'}\backslash \Pi$ that contains $0$.

   We make a few observations about $\Sigma_0$. First, we claim that $\Sigma\backslash \Sigma_0$ is connected. To see this, let $\Sigma_1$ be the union of connected components of $\Sigma^{'}\backslash \Pi$ other than $\Sigma_0$. Then $\Sigma\backslash \Sigma_0=\Sigma_1\cup \Pi\cup E$. Note that each connected component of $\Sigma_1$ shares a common boundary with $\Pi$. Since $\Pi$ is connected, so is $\Sigma_1\cup \Pi$. Next, we claim that $\Sigma_0\cap \mathring{\Sigma}$ has only one boundary component: otherwise, since both $\Sigma_0$ and $\Sigma\backslash \Sigma_0$ are connected, as before we can find a loop in $\mathring{\Sigma}$ intersecting one component of $\partial \Sigma_0\cap \mathring{\Sigma}$ exactly once, contradicting that $\Sigma$ is simply connected.

   Denote by $\Xi=\partial \Sigma_0\cap \mathring{\Sigma}$. By (1) in \cite[Lemma 25]{CL23}, we know that $\min_{x\in \Xi} r(x)\geqslant \rho$. Since $\Sigma\cap B_{\R^4}(0,\rho)$ is connected, this implies that $\left(\Sigma\cap B_{\R^4}(0,\rho)\right)\subset \Sigma_0$. Obviously, $\max_{x\in \Xi} r(x)\leqslant e^{\frac{5\pi}{\sqrt{\lambda}}}\rho$. Therefore, we have
   \begin{equation*}
       |\Xi|_g=\int_{\Xi}dA_g=\int_{\Xi} r^2 dA_{\tilde{g}}\leqslant e^{\frac{10\pi}{\sqrt{\lambda}}}\rho^2 |\Xi|_{\tilde{g}}\leqslant \frac{8\pi}{\lambda} e^{\frac{10\pi}{\sqrt{\lambda}}}\rho^2.
   \end{equation*}
   By our construction, the complement of the wedge domain with angle $\lceil \frac{\pi}{\theta}\rceil \cdot \theta$ is a convex body in $\R^4$. Note that $\Xi$ is the relative boundary of $\Sigma_0$, and $\partial^F_{+} {\Sigma}_0:=\partial \Sigma_0\cap \partial H^4_{+}$, $\partial^F_{-} {\Sigma}_0:=\partial \Sigma_0\cap \partial H^4_{-}$ are free boundaries of $\Sigma_0$. By \cite[Theorem 1.2]{LWW23}, we have
   \begin{equation*}
       |\Sigma\cap B_{\R^4}(0,\rho)|_g\leqslant |\Sigma_0|_g\leqslant \frac{4 |\B^3|_{g_c}}{|\partial \B^3|^{\frac{3}{2}}_{g_c}} |\Xi|_{g}^{\frac{3}{2}},
   \end{equation*}
   where $\B^3$ is the unit round ball in $(\R^3, g_c)$ with canonical metric $g_c$. Noting that the union contains the original free boundary minimal hypersurface, we complete the proof. 
\end{proof}

\bibliography{mybib}
\bibliographystyle{alpha}

\end{document}